\definecolor{darkred}{rgb}{.5,0,0}
\definecolor{darkgreen}{rgb}{0,.5,0}
\definecolor{darkblue}{rgb}{0,0,.5}
\DeclareSymbolFont{calletters}{OMS}{cmsy}{m}{n}
\DeclareSymbolFontAlphabet{\mathcal}{calletters}
\newtheorem{thm}{Theorem}[section]
\newtheorem{lemma}[thm]{Lemma}
\newtheorem{prop}[thm]{Proposition}
\newtheorem{cor}[thm]{Corollary}
\theoremstyle{definition}
\newtheorem{defi}[thm]{Definition}
\newtheorem{defrmk}[thm]{Definition and Remark}
\newtheorem{Q}[thm]{Problem}
\theoremstyle{remark}
\newtheorem{rmk}[thm]{Remark}
\newtheorem{rmks}[thm]{Remarks}
\newtheorem{notation}[thm]{Notation}
\newcommand{\supp}{\mathrm{supp}\,}
\newcommand{\mon}{\mathrm{mon}}
\newcommand{\bdry}{\mathrm{bdry}\,}
\newcommand{\lex}{\mathrm{lex}}
\newcommand{\al}{{\alpha}}
\newcommand{\be}{{\beta}}
\newcommand{\De}{{\Delta}}
\newcommand{\ga}{{\gamma}}
\newcommand{\Ga}{{\Gamma}}
\newcommand{\la}{{\lambda}}
\newcommand{\La}{{\Lambda}}
\newcommand{\p}{{\partial}}
\newcommand{\s}{{\sigma}}
\newcommand{\vp}{{\varphi}}
\newcommand{\om}{{\omega}}
\newcommand{\IN}{{\mathbb N}}
\newcommand{\IR}{{\mathbb R}}
\newcommand{\IK}{{\mathbb K}}
\newcommand{\cA}{{\mathcal A}}
\newcommand{\cC}{{\mathcal C}}
\newcommand{\cD}{{\mathcal D}}
\newcommand{\cN}{{\mathcal N}}
\newcommand{\cR}{{\mathcal R}}
\newcommand{\cV}{{\mathcal V}}
\newcommand{\fm}{{\mathfrak m}}
\newcommand{\oV}{\overline{V}}
\newcommand{\tG}{{\widetilde G}}
\newcommand{\tT}{{\widetilde T}}
\newcommand{\hvp}{{\hat \vp}}
\newcommand{\ua}{\underline{a}}
\newcommand{\uc}{\underline{c}}
\newcommand{\ux}{\underline{x}}
\newcommand\R{\mathcal R}
\renewcommand{\int}{\mathrm{o}}
\newcommand\Ss{\mathscr S}
\newcommand\Ts{\mathscr T}
\renewcommand\P{\mathscr P}
\newcommand\Fs{\mathscr F}
\newcommand\Gs{\mathscr G}
\newcommand\Cs{\mathscr C}
\newcommand\Us{\mathscr U}
\renewcommand\a{{\underline{a}}}
\newcommand\rank{\operatorname{rank}}
\newcommand\dist{\operatorname{dist}}
\renewcommand\Im{\operatorname{Im}}
\newcommand\Ker{\operatorname{Ker}}
\newcommand{\llb}{{[\![}}
\newcommand{\rrb}{{]\!]}}
\begin{document}

\title[$\cC^m$ solutions of semialgebraic or definable equations]
{$\cC^m$ solutions of semialgebraic or definable equations}

\author[E.~Bierstone]{Edward Bierstone}
\author[J.-B.~Campesato]{Jean-Baptiste Campesato}
\author[P.D.~Milman]{Pierre D. Milman}
\address{University of Toronto, Department of Mathematics, 40 St. George Street, Toronto, ON, Canada M5S 2E4}
\email{bierston@math.utoronto.ca}
\email{campesat@math.utoronto.ca}
\email{milman@math.utoronto.ca}
\thanks{Research supported by NSERC Discovery Grants RGPIN-2017-06537 (Bierstone) and
RGPIN-2018-04445 (Milman).}
\thanks{The authors are indebted to Charles Fefferman for many illuminating discussions
on the subjects of this article, and also to the referee for pointing out an error in 
an earlier version.}

\subjclass[2010]{Primary 03C64, 26E10; Secondary 14P10, 15A54, 30D60, 32B20, 46S30}

\keywords{Whitney extension problem, Brenner-Hochster-Koll\'ar problem, Hironaka's division algorithm,
semialgebraic, quasianalytic, polynomially bounded $o$-minimal structure, Whitney field, Chevalley function,
composite function theorem}

\begin{abstract}
We address the question of whether geometric conditions on the given data can
be preserved by a solution in (1) the Whitney extension problem, and (2) the
Brenner-Fefferman-Hochster-Koll\'ar problem, both for $\cC^m$ functions. Our results
involve a certain loss of differentiability.

Problem (2) concerns the solution of a system of linear equations
$A(x) G(x) = F(x)$, where $A$ is a matrix of functions on $\IR^n$, and $F,\,G$ are
vector-valued functions. Suppose the entries of $A(x)$ are semialgebraic (or, more
generally, definable in a suitable $o$-minimal structure). Then we find $r=r(m)$ such
that, if $F(x)$ is definable and the system admits a $\cC^r$ solution $G(x)$, then
there is a $\cC^m$ definable solution. Likewise in problem (1), given a closed definable
subset $X$ of $\IR^n$, we find $r=r(m)$ such that if $g: X\to \IR$ is definable and
extends to a $\cC^r$ function on $\IR^n$, then there is a $\cC^m$ definable
extension.
\end{abstract}

\date{April 19, 2021}
\maketitle

\setcounter{tocdepth}{1}
\tableofcontents

\section{Introduction}\label{sec:intro}
This article deals with geometric versions of the following two problems on $\cC^m$ functions.

\medskip\noindent
\emph{The Whitney extension problem.} Given a closed subset
$X$ of $\IR^n$ and a function $g: X\to \IR$, how can we determine whether $g$ extends to a
$\cC^m$ function $G$ on $\IR^n$? The Whitney extension problem was solved by Charles
Fefferman, who discovered a necessary and sufficient criterion for the existence of an 
extension $G$ \cite{Fef06}, building on work of Glaeser, Bierstone--Milman--Paw{\l}ucki 
and Brudnyi--Shvartsman (see \cite{Gla58}, \cite{BMP03}, \cite{BS01}).

\medskip\noindent
\emph{The Brenner--Fefferman--Hochster--Koll\'ar problem.} 
Given a system of equations $A(x)G(x) = F(x)$, where $A(x)$ and $F(x)$ are matrix-
and vector-valued functions (respectively) on $\IR^n$ (with no \emph{a priori} assumptions
on $A$ and $F$), how can we determine whether there is a $\cC^m$ solution $G(x)$? 
This problem was formulated by Fefferman and Koll\'ar in their paper \cite{FK13} on
the \emph{Brenner--Hochster--Koll\'ar problem} (the case that
the entries of $A(x)$ and the components of $F(x)$ are polynomials, and $m=0$; see 
\cite{Bre06}, \cite{EH17}, \cite{Kol12}). A necessary and sufficient criterion for the
existence of a solution $G$ in the Brenner--Fefferman--Hochster--Koll\'ar problem was
given by Fefferman and Luli \cite{FL14}.

\medskip
In both problems above, if the given data satisfies natural geometric properties, it is reasonable to 
ask whether there is a $\cC^m$ solution satisfying the same properties; in particular, we can ask the following.

\begin{Q}\label{q:wep.semialg} Let $g: X\to \IR$ denote a semialgebraic function defined on a
closed subset $X$ of $\IR^n$ (or, more generally, a function which is definable in an $o$-minimal
structure). Assume that $g$ extends to a $\cC^m$ function on $\IR^n$. Does $g$ extend to a
semialgebraic (or definable) $\cC^m$ function?
\end{Q}

An $o$-minimal structure in this article is always understood to be an expansion of $(\IR, +,\cdot)$.
We recall that a function is called \emph{semialgebraic} (or \emph{definable}) if its graph is
semialgebraic (or definable). If $f: X\to \IR$ is definable, then $X$ is a projection of the graph,
hence also definable.

\begin{Q}\label{q:bfhk.semialg} Consider a system of equations as in the
Brenner--Fefferman--Hochster--Koll\'ar problem,
where $A(x)$ and $F(x)$ are semialgebraic (or definable in an $o$-minimal structure). Assume
there is a $\cC^m$ solution $G(x)$. Is there a semialgebraic (or definable) $\cC^m$ solution?
\end{Q}

Problem \ref{q:wep.semialg} was raised by Bierstone and Milman in \cite{Zobin}. In \cite{FK13},
Fefferman and Koll\'ar raised Problem \ref{q:bfhk.semialg}, and showed that, if the entries of 
$A(x)$ and $F(x)$ are polynomials, then there is a continuous semialgebraic solution $G(x)$.
Aschenbrenner and Thamrongthanyalak have given
positive answers for Problem \ref{q:wep.semialg} in the case $m=1$, and for Problem \ref{q:bfhk.semialg} 
in the case $m=0$, using a definable version of the Michael selection theorem \cite{AT19}.
Recently, Fefferman and Luli have obtained positive answers to both questions for arbitrary $m$,
in the case that the dimension $n=2$ (at least in the semialgebraic case \cite{FL20}).

In this article, we give solutions to both problems \emph{modulo a certain loss of differentiability}, 
for given data that is semialgebraic or, more generally, 
definable in an expansion of the real field by restricted quasianalytic functions (see Remark
\ref{rem:model});
i.e., we show that, in each case, there is a function $r: \IN \to \IN$ such that, if the problem
admits a $\cC^{r(m)}$ solution, then it admits a semialgebraic (or definable) $\cC^m$ solution.
See Theorem \ref{thm:main} and Corollary \ref{cor:wep}.  In certain cases, we can show that there is a linear function $r(m)$; see Theorem \ref{thm:linear}.

There are several other results and examples in the literature that are related to Problem \ref{q:bfhk.semialg} and the results above. In the case that the entries of $A(x)$ and $F(x)$ are \emph{regulous} (i.e., rational functions that extend continuously to $\IR^n$) rather than just semialgebraic, Koll\'ar and Nowak gave a counterexample to the existence of a regulous solution $G(x)$; they found a single linear equation
$A(x)G(x) = F(x)$ (i.e., $A(x)$ is a $1 \times q$ matrix), where $F(x)$ and the
the entries of $A(x)$ are polynomial functions on $\IR^3$, which admits a continuous semialgebraic
solution $G(x)$, but no regulous solution \cite{KN15}. On the other hand, Kucharz and Kurdyka proved that 
(in the case of a single linear equation), if $F(x)$ and the
the entries of $A(x)$ are regulous functions on $\IR^2$ and there is a continuous solution $G(x)$,
then there is a regulous solution \cite{KK17}.
Adamus and Seyedinejad gave a counterexample
to the analogue of Problem \ref{q:bfhk.semialg} with \emph{semialgebraic} replaced by
\emph{semialgebraic and arc-analytic} (i.e., analytic on every real-analytic arc) \cite{AS18}.

Let us begin by stating our main result on Problem \ref{q:bfhk.semialg}, since we will show that
it is a consequence of another assertion (Theorem \ref{thm:varphi} below), from which our
result on the extension problem \ref{q:wep.semialg} also follows (Corollary \ref{cor:wep}).

\begin{thm}\label{thm:main}
Let $A(x)$ denote a $p \times q$ matrix whose entries are functions on $\IR^n$ that are
semialgebraic (or, more generally, definable in an expansion of the reals by restricted
quasianalytic functions).
Then there is a function $r: \IN \to \IN$ with 
the following property.  For all $m\in \IN$, if $F: \IR^n \to \IR^p$ is semialgebraic (or definable) 
and the system of equations
\begin{equation}\label{eq:main}
A(x)G(x) = F(x),
\end{equation}
admits a $\cC^{r(m)}$ solution $G(x)$, then there is a semialgebraic (or definable) $\cC^m$ solution.
\end{thm}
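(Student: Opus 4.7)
The plan is to derive Theorem \ref{thm:main} from the auxiliary Theorem \ref{thm:varphi} announced in the introduction, which — to judge from the keywords and the role it is advertised to play — should encode a composite-function / Whitney-extension principle in the definable quasianalytic setting, with loss of differentiability controlled by a Chevalley-type function. My strategy is to reduce the solvability of the system \eqref{eq:main} to a jet-field problem to which Theorem \ref{thm:varphi} applies.

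First I would localize: by $o$-minimal cell decomposition and definable $\cC^m$ partitions of unity it suffices to construct a definable $\cC^m$ solution on a relatively compact definable neighborhood of each point of $\IR^n$. Next, to make $A(x)$ manageable, I would apply a definable analogue of Hironaka-style desingularization to the Fitting ideals of $A$ (the ideals generated by the entries of $A$ together with the minors governing its rank); this produces a proper definable map $\pi:\tM\to\IR^n$ over which the pullback $A\circ\pi$ takes a locally monomial normal form. On $\tM$ the equation $(A\circ\pi)\tG=F\circ\pi$ can then be solved by direct manipulation of the monomials — essentially linear algebra over the local ring of definable $\cC^m$ functions — producing a definable $\cC^m$ solution $\tG$ upstairs. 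The (non-definable) $\cC^{r(m)}$ solution $G$ on $\IR^n$ enters only as a certificate: its pullback $G\circ\pi$ supplies the formal compatibility data needed to descend $\tG$ to a definable $\cC^m$ function on $\IR^n$, and the resulting function is forced to solve \eqref{eq:main} by quasianalyticity.

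The main obstacle, and where the loss of differentiability $r(m)-m$ is concentrated, is this descent step: producing a definable $\cC^m$ function on $\IR^n$ out of a definable $\cC^m$ function on $\tM$ subject to a prescribed compatibility condition along the exceptional fibers of $\pi$. This is exactly what Theorem \ref{thm:varphi} is designed to do, and quantifying the required differentiability is precisely the purpose of the Chevalley function attached to $\pi$: one needs $r(m)$ to exceed the Chevalley-function threshold so that the jets on $\IR^n$ extracted from $G$ are fine enough to witness the descent. Summing the Chevalley loss coming from the resolution $\pi$ with the standard loss from jet-to-function Whitney extension yields the final function $r=r(m)$. Quasianalyticity of the ambient $o$-minimal structure is used throughout to guarantee that formal Taylor series uniquely determine the definable functions we construct, so that the piecewise constructions in the localization step glue consistently.
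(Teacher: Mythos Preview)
Your reduction has a genuine gap: you treat $A$ as if it were $\cC^\infty$, but the hypothesis of Theorem~\ref{thm:main} is only that the entries of $A$ are definable --- they may fail to be even continuous. Hironaka-type desingularization applies to ideals of quasianalytic (hence $\cC^\infty$) functions, so ``resolving the Fitting ideals of $A$'' is not meaningful here, and there is no reason for the pullback $A\circ\pi$ to acquire a monomial normal form or even to become smooth (e.g.\ if $A(x)=|x|^{1/3}$ on $\IR$, no smooth reparametrization of the \emph{base} makes $A\circ\pi$ smooth). The paper's device is to uniformize not a subset of the base but the closure of the \emph{graph} of $(aA,a)$, where $a=(1+\|A\|^2)^{-1}$ is introduced to guarantee properness: the uniformization theorem then yields a proper $\cC^\infty$ definable map $\Phi=(\varphi,B,b):M\to\IR^n\times\IR^{pq}\times\IR$ in which $B$ is a genuinely $\cC^\infty$ definable matrix on $M$, agreeing with $(aA)\circ\varphi$ on a definable open dense set. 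It is this smooth replacement $B$, not $A\circ\pi$, to which Theorem~\ref{thm:varphi} is applied.

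A second issue is the ``solve upstairs, then descend $\tG$'' architecture. Theorem~\ref{thm:varphi} is not a descent statement for an arbitrary $\tG$ on $M$; its input is a definable $f$ on $M$ together with a $\cC^{r(m)}$ witness $g$ on $\IR^n$ satisfying $f=A\cdot(g\circ\varphi)$, and its output is a definable $\cC^m$ solution already living on $\IR^n$. A solution $\tG$ of $(A\circ\pi)\tG=F\circ\pi$ that you manufacture upstairs need not be constant on fibres of $\pi$, so it is not of the form $g\circ\pi$ and Theorem~\ref{thm:varphi} says nothing about it. The paper instead sets $f(y):=B(y)\,G(\varphi(y))$, which is definable because it coincides with $(aF)\circ\varphi$ on a dense open set and is $\cC^{r(m)}$ by construction, and applies Theorem~\ref{thm:varphi} directly; surjectivity of $\varphi$ together with the pointwise identity $B(z)=a(\varphi(z))A(\varphi(z))$, $b(z)=a(\varphi(z))$ at suitable preimages then forces $A\tG=F$ on all of $\IR^n$. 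No intermediate solution on $M$, no partition of unity, and no appeal to quasianalyticity for gluing enter this reduction.
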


We will deduce Theorem \ref{thm:main} from Theorem \ref{thm:varphi} following, 
concerning the solution of
a system of equations which is more complicated than \eqref{eq:main} because it involves
both matrix multiplication and composition with a mapping, but also simpler because 
the mapping and the entries of the matrix
are assumed to be both $\cC^\infty$ and semialgebraic (or definable in any polynomially
bounded $o$-minimal structure). A function that is
$\cC^\infty$ and semialgebraic is called a \emph{Nash function}; a Nash function is also 
real-analytic. 

Theorem \ref{thm:main} in the case that $A$ is $\cC^\infty$ and definable in a
polynomially bounded $o$-minimal structure holds with linear loss of differentiability; i.e.,
with a function $r(m) = \la m + \mu$, for all $m\in\IN$, where $\la,\mu\in\IN$ (see
Theorem \ref{thm:linear}).

\begin{thm}\label{thm:varphi}
Let $M$ denote a $\cC^\infty$ submanifold of $\IR^N$ (for some $N$)
which is semialgebraic (or definable in a given polynomially
bounded $o$-minimal structure). 
Let $A(x)$ denote a $p\times q$ matrix whose entries are Nash
(or $\cC^\infty$ definable) functions $A_{ij}:M\to\IR$, and let $\varphi:M\to\IR^n$ be a proper 
Nash (or $\cC^\infty$ definable) mapping. Then there is a function $r: \IN \to \IN$ with the
following property. If $f: M \to \IR^p$ is semialgebraic (or definable), and the 
system of equations
\begin{equation}\label{eq:varphi}
f(x) = A(x) g(\vp(x))
\end{equation} 
admits a $\cC^{r(m)}$ solution $g: \IR^n \to \IR^q$, then there is a semialgebraic (or definable)
$\cC^m$ solution. 
\end{thm}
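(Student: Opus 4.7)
The plan is to reduce the mixed matrix/composition equation \eqref{eq:varphi} to a combination of Hironaka-style formal division and a composite function theorem, both in the polynomially bounded definable category, with quantitative control of the loss of differentiability.

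First I would apply resolution of singularities in the appropriate definable category (available for semialgebraic sets and, more generally, for polynomially bounded $o$-minimal expansions by restricted quasianalytic functions) to the pair $(A,\vp)$: there exists a proper definable $\cC^\infty$ modification $\pi: \tM \to M$ such that, in local coordinates around each point of $\tM$, the components of $\vp \circ \pi$ and the entries of $A \circ \pi$ are simultaneously Nash (or $\cC^\infty$ definable) units times fixed monomials. Since $\vp$ is proper, so is $\vp \circ \pi$, and by finiteness in the definable category $\tM$ is covered by a finite family of such normal-form charts.

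In this normalized setting the problem splits. On each chart, solving $A(x)h(x) = f(x)$ for the unknown $h = g \circ \vp$ reduces, via Hironaka's formal division, to a question of divisibility of components of $f$ by explicit monomials; Chevalley-type estimates in the definable setting yield a function $r_1(m)$ and a definable $\cC^{r_0(m)}$ Whitney field $h$ on $\tM$ whenever $f$ is $\cC^{r_1(m)}$ and a $\cC^{r(m)}$ solution $g$ is assumed to exist for $r(m)$ large enough. I would then invoke a definable composite function theorem in the spirit of Bierstone--Milman--Paw\l ucki: there exists $r_0(m)$ such that any definable $\cC^{r_0(m)}$ function on $M$ whose $m$-jet at every point factors formally through $\vp$ actually equals $\tilde g \circ \vp$ for some definable $\cC^m$ map $\tilde g: \IR^n \to \IR^q$. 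The provided $\cC^{r(m)}$ solution $g$ supplies precisely these formal pullback data. Composing the two loss functions produces the required $r(m)$, uniformly in $f$ by finiteness of the chart cover and semicontinuity of the Chevalley function.

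The main obstacle is the interaction between the division step and the composition step: the formal pullback conditions imposed on $h = \tilde g \circ \vp$ must be compatible with the monomial factorization coming from division by $A$, and one has to verify that the definable $\cC^m$ function $\tilde g$ synthesized on $\IR^n$ through the composite function theorem genuinely solves the original equation on $M$, not merely on the modified space $\tM$ or on $\vp(M)$. This is in effect a flatness/descent statement for the linear operator $g \mapsto A \cdot (g\circ\vp)$ in the definable $\cC^m$ category; quantifying this descent in terms of the Hironaka exponents produced by resolution and of the Chevalley function attached to $\vp$ is the technical heart of the argument and will fix the explicit shape of $r(m)$.
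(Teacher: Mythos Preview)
Your proposal has a genuine gap at precisely the point you label the ``main obstacle,'' and that obstacle is not a technicality but the heart of the matter. The decomposition into a division step followed by a composition step does not work as stated: if you solve $A(x)h(x)=f(x)$ for $h$ on $\tM$ via Hironaka division, the $h$ you obtain is determined only modulo the kernel of multiplication by $A$, and nothing forces the particular $h$ produced by the division algorithm to be constant on the fibres of $\vp\circ\pi$, let alone to have Taylor expansions that factor through $\vp\circ\pi$. The existence of the (non-definable) $g$ tells you that \emph{some} element of the affine space of solutions to $Ah=f$ factors through $\vp$, but not that the one handed to you by division does. Conversely, you cannot first apply a composite-function theorem, because $f$ itself need not factor through $\vp$. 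The two steps are entangled and cannot be run in sequence; selecting, pointwise in $b\in\vp(M)$, a formal solution that simultaneously respects both the $A$-module structure and the pullback structure is the entire content of the theorem.

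The paper's proof avoids this by treating the combined operator $g\mapsto A(x)\,g(\vp(x))$ as a single object at the formal level. For each $b\in\vp(M)$ it defines the module of relations $\cR_r(b)\subset\IR\llb y\rrb^q$ consisting of those $W$ with $T^r_aA\cdot W(\tT^r_a\vp)\equiv 0\pmod{(x)^{r+1}}$ for all $a\in\vp^{-1}(b)$; this encodes $A$ and $\vp$ together. One then stratifies $X=\vp(M)$ (using the fibre product $M^s_\vp$) so that the diagram of initial exponents of $\cR_r(b)$ is constant on strata, normalizes the formal solutions $W_b$ supplied by the hypothesis modulo $\cR_r(b)$ via Cramer's rule, and shows that the normalized family is a definable $\cC^l$ Whitney field on each stratum (Borel's lemma with parameter). \L ojasiewicz inequalities control the boundary behaviour, the l'H\^opital--Hestenes lemma glues across strata, and the definable Whitney extension theorem produces $g$. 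No separate definable composite-function theorem is invoked; indeed the case $A=I$ of the present theorem \emph{is} that theorem, so citing one would be circular.
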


\begin{rmk}\label{rmk:varphi}
We will actually prove a stronger statement than Theorem \ref{thm:varphi}:
instead of the assumptions that $f$ is definable and \eqref{eq:varphi} admits a $\cC^{r(m)}$ solution $g$, 
it is enough to assume that $f$ is $\cC^{r(m)}$ and definable, and
there is a formal solution to order $r(m)$ at each
$b \in \varphi(M)$; i.e., for each $b$, there is a (vector-valued) polynomial function $P(y)$ such that 
$f - A\cdot (P\circ\vp)$
is $r(m)$-\emph{flat} on $\vp^{-1}(b)$ (the latter means that $f - A\cdot (P\circ\vp)$ vanishes
to order $r(m)$ at every point of $\vp^{-1}(b)$). See Theorem \ref{thm:formal};
the proof follows closely that of 
\cite[Thm.\,1.2]{BMP96}. Theorem \ref{thm:formal}
is not true without some
properness assumption on the mapping $\vp$ (see Remark \ref{rmk:proper}), but Theorem \ref{thm:varphi}
may well be true with a weaker assumption than proper.
\end{rmk}

\begin{rmk}\label{rem:model}
To deduce Theorem \ref{thm:main} and Corollary \ref{cor:wep} from Theorem \ref{thm:varphi}, we will use 
the fact that, if $X$ is a closed subset of $\IR^n$ which is semialgebraic (or definable
in the given structure),
then there is a semialgebraic (or definable)
$\cC^\infty$ submanifold $M$ of $\IR^N$, 
of dimension $= \dim X$, and a proper Nash (or $\cC^\infty$ definable) mapping 
$\vp: M \to \IR^n$ such that $\vp(M) = X$. This statement (the \emph{uniformization theorem};
cf. \cite{BM88-ihes})
is a consequence of resolution of singularities in quasianalytic classes \cite{BM04}.

Given a polynomially bounded $o$-minimal structure, in general, the
class of $\cC^\infty$ functions which are locally definable is a \emph{quasianalytic class}, 
as defined axiomatically in \cite{BM04}; the quasianalyticity axiom is satisfied
according to a result
of C.~Miller \cite{Mil95}. On the other hand, given a quasianalytic class $\cC$, let $\IR_\cC$
denote the expansion of the real field by restricted functions of class $\cC$ (i.e., restrictions
to closed cubes of functions of class $\cC$, extended to $0$ outside the cube). Then $\IR_\cC$
is an $o$-minimal structure, and $\IR_\cC$ is polynomially bounded and model-complete,
and admits $\cC^\infty$ cell decomposition \cite{RSW}.

Given a quasianalytic class $\cC$, we can define sub-quasianalytic sets and 
sub-quasianalytic functions in an obvious way generalizing subanalytic. (In particular, a
sub-quasianalytic function is a function whose graph is sub-quasianalytic.) 
Resolution of singularities of an ideal generated by quasianalytic
functions holds, by \cite{BM04} (even if the ideal is not finitely generated 
\cite{BMV15}). The uniformization theorem for
sub-quasianalytic sets follows from resolution of singularities in exactly the same way
as the uniformization theorem for subanalytic sets \cite[Thm.\,0.1]{BM88-ihes} follows
from resolution of singularities in the case that $\cC$ is the class of real-analytic functions.

If $\cC$ is a quasianalytic class, then a set which is definable in $\IR_\cC$ is
sub-quasianalytic. So the uniformization theorem applies to sets that are definable
in $\IR_\cC$.

In Theorem \ref{thm:main}, there is no assumption on $A$ and $F$ other than definability
in the given structure,; we 
apply the uniformization
theorem to the closure of the graph of $A$ (likewise, to $X$ in Corollary \ref{cor:wep}). 
In Theorem \ref{thm:varphi}
(or Theorem \ref{thm:linear}),
on the other hand, $A$ and $\vp$
are $\cC^\infty$ and therefore quasiananalytic, as explained above; as a consequence, all geometric 
constructions (e.g., stratification) in the proof of the theorem are sub-quasianalytic.
See also Remark \ref{rmks:strat}(2).
\end{rmk}

We will reduce Theorem \ref{thm:main} to the assertion of Theorem \ref{thm:varphi} in the
special case that the image $\vp(M) = \IR^n$. (We will still need to use the uniformization theorem
to obtain this reduction.) But the general version of Theorem \ref{thm:varphi} is of interest because 
the composite function case ($A(x) = I$) immediately gives our result
on Problem \ref{q:wep.semialg}---we can take $\vp: M\to \IR^n$ to be a proper Nash mapping
such that $\vp(M)=X$, as given by the uniformization theorem.

\begin{cor}\label{cor:wep} 
Let $X$ denote a closed subset of $\IR^n$ which is semialgebraic (or definable in 
an expansion of the reals by restricted quasianalytic functions).
Then there is a function $r: \IN \to \IN$ such that, for all $m\in \IN$,
if $g: X \to \IR$ is a semialgebraic (or definable) function and $g$ extends to a $\cC^{r(m)}$ function
on $\IR^n$, then $g$ also has a $\cC^m$ semialgebraic (or definable) extension.
\end{cor}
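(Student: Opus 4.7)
The plan is to derive the corollary as the special case $A(x) \equiv 1$ (so $p=q=1$) of Theorem \ref{thm:varphi}, as already indicated in the discussion preceding the statement. First, apply the uniformization theorem recalled in Remark \ref{rem:model} to the closed definable set $X$: this yields a definable $\cC^\infty$ submanifold $M \subset \IR^N$ (Nash in the semialgebraic case) and a proper Nash (respectively $\cC^\infty$ definable) mapping $\vp : M \to \IR^n$ with $\vp(M) = X$ and $\dim M = \dim X$. Let $r : \IN \to \IN$ be the function supplied by Theorem \ref{thm:varphi} applied to this $\vp$ and the $1\times 1$ identity matrix $A\equiv 1$.

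Now fix $m\in \IN$ and a definable $g : X \to \IR$ admitting a $\cC^{r(m)}$ extension $G : \IR^n \to \IR$. Set
\[
f := G \circ \vp = g \circ \vp : M \to \IR .
\]
Since $g$ and $\vp$ are definable, $f$ is definable; and since $G$ is of class $\cC^{r(m)}$ and $\vp$ is $\cC^\infty$, $f$ is $\cC^{r(m)}$. In particular $G$ itself is a $\cC^{r(m)}$ solution $h : \IR^n \to \IR$ of the composite-function equation
\[
f(x) = h(\vp(x)), \qquad x\in M .
\]
By Theorem \ref{thm:varphi} (with $A\equiv 1$), this equation admits a definable $\cC^m$ solution $h : \IR^n \to \IR$. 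For any $y\in X = \vp(M)$, choosing $x\in \vp^{-1}(y)$ gives $h(y) = h(\vp(x)) = f(x) = g(y)$, so $h$ is the required definable $\cC^m$ extension of $g$.

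There is essentially no obstacle once Theorem \ref{thm:varphi} is granted; the entire substance of the proof is already packaged in that theorem. The only nontrivial external input is the uniformization of $X$ by a proper Nash/definable map, which is where the hypothesis that $X$ is semialgebraic or definable in an expansion of the reals by restricted quasianalytic functions enters, via resolution of singularities (see Remark \ref{rem:model}). The hypothesis that $\vp$ is proper, needed to apply Theorem \ref{thm:varphi}, is automatic from uniformization and the assumption that $X$ is closed in $\IR^n$.
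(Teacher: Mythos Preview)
Your proof is correct and follows exactly the approach the paper indicates just before stating the corollary: take $A(x)=I$ in Theorem \ref{thm:varphi} and use the uniformization theorem (Remark \ref{rem:model}) to produce the proper definable $\cC^\infty$ map $\vp:M\to\IR^n$ with $\vp(M)=X$. The paper does not spell out the details you supply (that $f=g\circ\vp$ is definable and $\cC^{r(m)}$, and that the resulting $h$ restricts to $g$ on $X$), but these are precisely the routine verifications needed.
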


We conclude this introduction by showing how Theorem \ref{thm:main} follows from
Theorem \ref{thm:varphi}. Sections \ref{sec:div}--\ref{sec:main} are devoted to the proof of Theorem
\ref{thm:varphi}, which closely follows that of the composite function theorem
in \cite{BMP96}. Theorem \ref{thm:linear} on linearity of $r(m)$ requires some of
the technology of \cite{BM17}, and we give only a sketch of the proof in Section \ref{sec:linear}.

\begin{proof}[Proof of Theorem \ref{thm:main}] Consider a system of equations \eqref{eq:main},
where $A$ and $F$ satisfy the hypotheses of Theorem \ref{thm:main}.
We regard $A(x)$ as a mapping $\IR^n \to \IR^{pq}$, and write
$a(x)=\left(1+\|A(x)\|^2\right)^{-1}$, where $\|\cdot\|$ denotes the Euclidean norm (or any norm) on $\IR^{pq}$.
Let $\Gamma$ denote the graph of the mapping $(aA,a)$ in $\IR^n \times \IR^{pq} \times \IR$. 
The closure $C$ of $\Gamma$ is a definable set of dimension $n$. By the uniformization theorem, 
there exists a proper $\cC^\infty$ definable mapping $\Phi=(\varphi, B, b):M\to\IR^n\times\IR^{pq}\times\IR$ 
from a $\cC^\infty$ definable manifold $M$ onto $C$. The
mapping $\vp: M \to \IR^n$ is also proper, because of the choice of $a(x)$ (compare with 
Remark \ref{rmk:proper}).

We claim that $\Phi$ can be taken with the property that 
$((aA)(\varphi(y)),a(\varphi(y)))=(B(y),b(y))$ on a definable open dense subset of $M$.
To see this, let $C_n$ denote the closure of the $n$-dimensional part of $\Gamma$ (the set of
points $\xi\in\Gamma$ such that $\Gamma$ has dimension $n$ in any neighbourhood of $\xi$).
Inductively, for each $k=n-1, n-2,\ldots, 0$, let $C_k$ denote the closure of the $k$-dimensional
part of $\Gamma\backslash (C_{k+1} \cup \cdots \cup C_n)$. Then each $C_k$ is a definable
set of pure dimension $k$ (or empty). By the uniformization theorem, for each nonempty $C_k$, there is a
proper $\cC^\infty$ definable mapping $\Phi_k =(\varphi_k, B_k, b_k): M_k\to\IR^n\times\IR^{pq}\times\IR$ 
from a $\cC^\infty$ definable manifold $M_k$ onto $C_k$, where $\Phi_k$ and $\vp_k$ have 
generic rank $k$ on each component of $M_k$. Then each $\Phi_k$ satisfies the required property.
Let $M$ be the disjoint union of the $M_k$, and let $\Phi$ be the mapping given by
$\Phi_k$ on each $M_k$. Then $\Phi$ has the property claimed.
 
Since $\varphi$ is surjective and $\Gamma \subset \Phi(M)$, for each $x\in\IR^n$, there exists 
$z\in M$ such that $x=\varphi(z)$, $a(x)A(x)=B(z)$ and $a(x)=b(z)$. 

We apply Theorem \ref{thm:varphi} to a system of equations
\begin{equation*}
f(y) = B(y) g(\vp(y))
\end{equation*}
on $M$; let $r: \IN \to \IN$ denote the function given by Theorem \ref{thm:varphi}.

Suppose that the equation $A(x) G(x) = F(x)$ (i.e., the equation \eqref{eq:main}) admits a
$\cC^{r(m)}$ solution $G(x)$. Then $f(y) := B(y)G(\vp(y))$ is $\cC^{r(m)}$, and, therefore, $f(y)$
is definable because there is a definable open dense subset of $M$ on which
$$
B(y)G(\vp(y)) = a(\vp(y))A(\vp(y)) G(\vp(y)) = a(\vp(y)) F(\vp(y)).
$$

By Theorem \ref{thm:varphi}, there exists a definable $\cC^m$ function $\tG: \IR^n \to \IR^q$
such that $B(y)G(\vp(y)) = f(y) = B(y)\tG(\vp(y))$, for all $y\in M$.

Now, for all $x\in \IR^n$, there exists $z\in M$ such that $x = \vp(z)$,
$(aA)(x) = (aA)(\vp(z)) = B(z)$, and $a(x) = a(\vp(z)) = b(z)$; therefore,
\begin{align*}
a(x)F(x) &= a(\vp(z))F(\vp(z))\\
                            &= a(\vp(z))A(\vp(z))G(\vp(z))\\
                            &= B(z) G(\vp(z))\\
                            &= B(z) \tG(\vp(z)) = a(x) A(x) \tG(x),
\end{align*}
so that $F(x) = A(x) \tG(x)$, as required.
\end{proof}

\section{Hironaka's division algorithm}\label{sec:div}

\begin{notation}
We use standard multiindex notation: Let $\IN$ denote the nonnegative integers. 
If $\al = (\al_1,\ldots,\al_n) \in \IN^n$ and $x=(x_1,\ldots,x_n)$, we write $|\al| := \al_1 +\cdots +\al_n$, 
$\al! := \al_1!\cdots\al_n!$, $x^\al := x_1^{\al_1}\cdots x_n^{\al_n}$,
and $\p^{|\al|} / \p x^{\al} := \p^{\al_1 +\cdots +\al_n} / \p x_1^{\al_1}\cdots \p x_n^{\al_n}$. 

Let $A$ be a ring. Let $A\llb x\rrb = A\llb x_1,\ldots,x_n\rrb$
denote the ring of formal power series in $n$ indeterminates with coefficients in $A$. Given 
$F = F(x) \in A\llb x\rrb^p$,
we write $F(x) = \sum_{j=1}^p \sum_{\al\in\IN^n} F_{(\al,j)}x^{(\al,j)}$, where each coefficient
$F_{(\al,j)}\in A$, and $x^{(\al,j)}:= (0,\ldots,x^\al,\ldots,0)$ with $x^\al$ in the $j$th place. If $(\al,j) \in
\IN^n\times\{1,\ldots,p\}$ and $\be \in \IN^n$, we define $(\al,j) + \be := (\al +\be,j)$. We will use the
following notation:
\begin{align*}
\supp F &:= \{(\al,j)\in \IN^n\times\{1,\ldots,p\}:\, F_{(\al,j)}\neq 0\},\\
\exp F &:= \min\,\supp F,\\
\mon F &:= F_{\exp F}x^{\exp F},
\end{align*}
where $\min$ is with respect to the total ordering of $\IN^n\times\{1,\ldots,p\}$ given by 
$\lex (|\al|,j,\allowbreak\al) = \lex (|\al|,j,\allowbreak\al_1,\ldots,\al_n)$, 
where $\lex$ denotes lexicographic order. We call $\exp F$
the \emph{initial exponent}, $\mon F$ the \emph{initial monomial} and $F_{\exp F}$ the
\emph{initial coefficient} of $F$. 

Let $\fm$ or $(x) = (x_1,\ldots,x_n)$ denote the maximal ideal of $\IR\llb x\rrb$; $(x)$ is the
ideal generated by $x_1,\ldots,x_n$.
\end{notation}

\begin{rmks}\label{rem:order}
(1) In the following,
instead of the preceding order, we may also use any total ordering of $\IN^n\times\{1,\ldots,p\}$ which is
compatible with addition in the sense that $(\al,j) + \be > (\al,j)$ if $\be \in \IN^n\setminus\{0\}$.
For example, we may use $\lex (L(\al),j,\al)$, where $L$ is a positive linear form 
$L(\al) = \sum_{i=1}^n \la_i\al_i$ (i.e., all $\la_i$ are positive real numbers).

\medskip\noindent
(2) For the proof of our main theorem, we will need the results of this section only in the case
that the coefficient ring $A$ is the field $\IR$. The division results here are nevertheless stated with a 
general coefficient ring because there is essentially no extra cost involved, and because the proof of
Theorem \ref{thm:linear} (which is only sketched in Section \ref{sec:linear}) depends on the 
division algorithm with more general coefficients.
\end{rmks}

\subsection{Hironaka's formal division algorithm \cite{Hir64}, \cite[\S6]{BM87},
\cite[\S2]{BM17}} \label{subsec:formaldiv}
\begin{thm}\label{thm:formaldiv}
Let $L$ denote a positive linear form on $\IR^n$, and consider the corresponding ordering of
$\IN^n\times\{1,\ldots,p\}$ (as in Remark \ref{rem:order}(1)).
Let $\Phi_1,\ldots,\Phi_q\in A\llb x\rrb^p$. Set $(\al_i,j_i):=\exp \Phi_i$, $i=1,\ldots,q$, and consider the
partition $\{\De_1,\ldots\De_q,\De\}$ of $\IN^n\times\{1,\ldots,p\}$ given by $\De_1 := (\al_1,j_1) +\IN^n$,
\begin{align*}
\De_i &:= (\al_i,j_i) +\IN^n \setminus \bigcup_{k=1}^{i-1}\De_k,\quad i=2,\ldots,q,\\
\De &:= \IN^n\times\{1,\ldots,p\} \setminus \bigcup_{i=1}^{q}\De_i.
\end{align*}
Suppose that $A$ is an integral domain, and let $\IK$ be the field of fractions of $A$.
Let $S$ denote the multiplicative subset of $A$ generated by the initial coefficients $\Phi_{i,(\al_i,j_i)}$,
and let $B$ denote any subring of $\IK$ containing the localization $S^{-1}A$.
Then, for every $F \in B\llb x\rrb^p$, there exist unique $Q_i = Q_i(F) \in B\llb x\rrb$, $i=1,\ldots,q$, and
$R = R(F) \in B\llb x\rrb^p$ such that
\begin{align*}
F &= \sum_{i=1}^q Q_i\Phi_i + R,\\
(\al_i,j_i) + \supp Q_i \subset &\De_i,\,\,\, i=1,\ldots,q,\,\,\, \text{and }\,\, \supp R \subset \De.
\end{align*}
Moreover,
$$
(\al_i,j_i) +\exp Q_i \geq \exp F,\,\,\, i=1,\ldots,q,\,\,\, \text{and}\,\, \exp R \geq \exp F.
$$
\end{thm}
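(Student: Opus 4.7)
The plan is to prove existence by a constructive iteration that successively strips off the initial monomial of a ``current remainder'', and uniqueness by invoking the disjointness of $\De_1,\ldots,\De_q,\De$ together with the additivity of $\exp$ under multiplication in an integral domain.

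For existence, I would define a sequence $F^{(0)} := F$, $F^{(1)}, F^{(2)}, \ldots$ in $B\llb x\rrb^p$ together with successive pieces of the $Q_i$'s and of $R$ by induction. At step $k$, let $e_k := \exp F^{(k)}$ and let $c_k \in B$ be the initial coefficient $F^{(k)}_{e_k}$. If $e_k \in \De_{i}$ for the (necessarily unique) $i$, write $e_k = (\al_i,j_i) + \be_k$ with $\be_k \in \IN^n$, adjoin the term $(c_k/\Phi_{i,(\al_i,j_i)})\, x^{\be_k}$ to $Q_i$, and set
$$
F^{(k+1)} := F^{(k)} - \frac{c_k}{\Phi_{i,(\al_i,j_i)}}\, x^{\be_k} \Phi_i.
$$
If $e_k \in \De$, adjoin $c_k x^{e_k}$ to $R$ and set $F^{(k+1)} := F^{(k)} - c_k x^{e_k}$. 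The denominator $\Phi_{i,(\al_i,j_i)}$ is invertible in $B$ since $B \supset S^{-1}A$, so every adjoined coefficient lies in $B$. The crucial observation is that $e_{k+1} > e_k$ in the chosen ordering: the monomial at $e_k$ is exactly cancelled, and by compatibility of the order with addition every remaining term of the subtracted quantity has exponent strictly above $e_k$.

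Since the ordering is a well-ordering with only finitely many elements below any prescribed total degree, the strictly increasing sequence $(e_k)$ must satisfy $|e_k| \to \infty$. Hence the formal sums
$Q_i := \sum_{k:\, e_k \in \De_i} (c_k/\Phi_{i,(\al_i,j_i)})\, x^{\be_k}$ and
$R := \sum_{k:\, e_k \in \De} c_k x^{e_k}$ converge in the $(x)$-adic topology, and telescoping gives $F = \sum_i Q_i \Phi_i + R$. The support conditions hold by construction, since for each $k$ appearing in the $i$-th sum one has $\be_k + (\al_i, j_i) = e_k \in \De_i$. The last inequalities follow because every $e_k \geq e_0 = \exp F$, so the initial exponent of each $Q_i$ and $R$ is at least $\exp F$ (after translation).

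For uniqueness, suppose $\sum Q_i \Phi_i + R = 0$ with the same support conditions, not all terms vanishing. Since $B \subset \IK$ is a domain, so is $B\llb x \rrb$, and therefore $\exp(Q_i\Phi_i) = \exp Q_i + \exp \Phi_i = (\al_i,j_i) + \exp Q_i$, which lies in $\De_i$ by the support hypothesis on $Q_i$; and $\exp R \in \De$. Let $e$ be the minimum of these initial exponents over the nonzero summands. Disjointness of $\De_1,\ldots,\De_q,\De$ forces exactly one summand to contribute at position $e$, so its (nonzero) initial coefficient survives in the sum, contradicting $\sum Q_i\Phi_i + R = 0$. The main obstacle is the convergence step: one must both check that the algorithm strictly advances $e_k$ in the order (which requires compatibility of the order with translation) and that $|e_k| \to \infty$ (so that the accumulated quotient and remainder are genuine elements of $B\llb x\rrb$). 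Everything else is bookkeeping with initial exponents and the fact that the $\De_i$ are pairwise disjoint translates of orthants.
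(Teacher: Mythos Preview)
Your argument is correct and is the standard constructive proof of Hironaka's formal division algorithm. The paper itself does not supply a proof of this theorem; it simply writes ``See \cite[\S2]{BM17} for a simple proof.'' The iteration you describe---successively cancelling the initial monomial of the current remainder and routing the corresponding term either into the appropriate $Q_i$ or into $R$---is precisely the argument given in that reference, so there is nothing to compare at the level of strategy.

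A couple of minor points worth making explicit if you write this up fully. First, the step ``$e_{k+1} > e_k$'' uses translation compatibility of the order in the strong form: if $(\gamma,j) > (\gamma',j')$ then $(\gamma+\beta,j) > (\gamma'+\beta,j')$ for every $\beta\in\IN^n$. This is easy to check for the order $\lex(L(\al),j,\al)$ with $L$ positive linear, but it is slightly stronger than the compatibility condition stated in Remark~\ref{rem:order}(1), so it should be verified. Second, your claim that $|e_k|\to\infty$ follows because, for a positive linear form $L$, the set $\{(\al,j):|\al|\le M\}$ is finite for every $M$, so a strictly increasing sequence in the well-order can contain only finitely many terms of bounded degree; this then gives $(x)$-adic convergence of the $Q_i$ and $R$ and justifies the coefficientwise passage to the limit in the telescoping identity. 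You flag this correctly as the only real thing to check.

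For uniqueness, your use of $\exp(Q_i\Phi_i) = (\al_i,j_i)+\exp Q_i$ is valid because $B\subset\IK$ is a domain and the order is compatible with translation in the strong sense above; together with the disjointness of the $\De_i$ and $\De$ this forces all summands to vanish, exactly as you say.
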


See \cite[\S2]{BM17} for a simple proof.

\subsection{The diagram of initial exponents \cite[\S2]{BM17}}\label{subsec:diag} 
Let $A$ be a ring, and let $M$ be a submodule
of $A\llb x\rrb^p$. We define the \emph{diagram of initial exponents} $\cN(M) \subset \IN^n\times \{1,\ldots,p\}$ as
$$
\cN(M) := \{\exp F:\, F \in M\setminus \{0\}\}.
$$
Clearly, $\cN(M) + \IN^n = \cN(M)$.
We say that $(\al_0,j_0)\in \IN^n\times \{1,\ldots,p\}$ is a \emph{vertex} of $\cN(M)$ if 
$(\cN(M)\setminus\{(\al_0,j_0)\}) + \IN^n \neq \cN(M)$. It is easy to see that $\cN(M)$ has finitely many vertices.
Let $(\al_i,j_i)$, $i=1,\ldots,q$, denote the vertices of $\cN(M)$. Choose $\Phi_i \in M$ such that
$(\al_i,j_i) = \exp \Phi_i$, $i=1,\ldots,q$. (We call $\Phi_i$ a \emph{representative} of the vertex $(\al_i,j_i)$.)

\begin{cor}\label{cor:diag}
Assume that $A$ is an integral domain. Then,
using the notation of Theorem \ref{thm:formaldiv}, we have:
\begin{enumerate}
\item\begin{enumerate}
\item $\cN(M) = \bigcup_{i=1}^q \De_i$;
\smallskip\item $\Phi_1,\ldots,\Phi_q$ generate $B\llb x\rrb\cdot M$;
\smallskip\item if $G \in B\llb x\rrb^p$, then $G \in B\llb x\rrb\cdot M$ if and only if $R(G) = 0$.
\end{enumerate}
\medskip\item There exist unique generators $\Psi_i$, $i=1,\ldots,q$, of $S^{-1}A\llb x\rrb\cdot M$
such that
$$
\Psi_i = x^{(\al_i,j_i)} + R_i,\quad \text{where }\, \supp R_i \subset \De.
$$
\end{enumerate}
\end{cor}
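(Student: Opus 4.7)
The plan is to reduce all three parts of the corollary to Theorem \ref{thm:formaldiv} together with one key lemma: \emph{if $H \in B\llb x\rrb \cdot M$ and $\supp H \subseteq \De$, then $H=0$}.

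First, I would establish (1)(a). The inclusion $\bigcup \De_i \subseteq \cN(M)$ uses that $M$ is an $A\llb x\rrb$-submodule: for each $(\al_i+\ga,j_i) \in (\al_i,j_i)+\IN^n$, the element $x^\ga \Phi_i$ lies in $M$ with initial exponent $(\al_i+\ga,j_i)$. The reverse inclusion follows from the Dickson-type finiteness of the vertex set, which gives $\cN(M) \subseteq \bigcup_i ((\al_i,j_i)+\IN^n) = \bigcup \De_i$. Granted the key lemma, (1)(b) and (1)(c) follow swiftly. For $G \in B\llb x\rrb \cdot M$, Theorem \ref{thm:formaldiv} produces $G = \sum Q_i \Phi_i + R$ with $\supp R \subseteq \De$; then $R = G - \sum Q_i \Phi_i \in B\llb x\rrb \cdot M$ must vanish, whence $G \in (\Phi_1,\ldots,\Phi_q)_{B\llb x\rrb}$ (the converse direction of (1)(c) is immediate). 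For part (2), I would apply Theorem \ref{thm:formaldiv} in $S^{-1}A\llb x\rrb^p$ to the monomial $x^{(\al_i,j_i)}$, writing $x^{(\al_i,j_i)} = \sum_j Q_{ij}\Phi_j + \tilde R_i$ with $\supp \tilde R_i \subseteq \De$, and set $\Psi_i := x^{(\al_i,j_i)} - \tilde R_i = \sum_j Q_{ij}\Phi_j \in S^{-1}A\llb x\rrb \cdot M$. Since $\exp\Psi_i = (\al_i,j_i)$ are the same vertices, (1)(b) applied with the $\Psi_i$'s as representatives yields that they generate $S^{-1}A\llb x\rrb \cdot M$; uniqueness follows by applying the key lemma to the difference of two candidate $\Psi_i$'s.

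The main obstacle is the key lemma. I would prove it first for $B = S^{-1}A$ by clearing denominators: given $H \in S^{-1}A\llb x\rrb \cdot M$ with $\supp H \subseteq \De$, pick $\phi \in S$ so that $\phi H \in A\llb x\rrb \cdot M = M$; since $A$ is an integral domain, $\supp(\phi H) = \supp H \subseteq \De$, yet if $\phi H \neq 0$ then part (1)(a) forces $\exp(\phi H) \in \cN(M) = \bigcup \De_i$, contradicting the partition. To extend to arbitrary $B \subseteq \IK$ containing $S^{-1}A$, I would first deduce from the $S^{-1}A$ case that each $F \in M$ admits a division $F = \sum_i Q_{iF}\Phi_i$ with $Q_{iF} \in S^{-1}A\llb x\rrb \subseteq B\llb x\rrb$ satisfying $(\al_i,j_i) + \supp Q_{iF} \subseteq \De_i$; this persists as the unique $B\llb x\rrb$-division by the uniqueness clause of Theorem \ref{thm:formaldiv}. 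Then, writing $H = \sum_k c_k F_k$ and substituting yields $H = \sum_i \hat Q_i \Phi_i$ with $\hat Q_i \in B\llb x\rrb$; comparing this with the trivially valid alternative decomposition $H = 0 + H$ (whose support conditions hold since $\supp H \subseteq \De$) and invoking uniqueness should force $H = 0$. The delicate technical point is that the $\hat Q_i$'s need not a priori satisfy $(\al_i,j_i) + \supp \hat Q_i \subseteq \De_i$, so matching the two decompositions requires an iterative re-division argument that exploits the well-ordering of the exponents to ensure termination at each fixed exponent level and recovers the vanishing of $H$.
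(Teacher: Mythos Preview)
Your overall plan matches the paper's approach: isolate the key lemma (if $H\in B\llb x\rrb\cdot M$ and $\supp H\subset\De$ then $H=0$) and deduce (1)(b),(c) and (2) from it together with Theorem~\ref{thm:formaldiv}. The paper's own proof is extremely terse and treats this lemma as the one-line observation ``$\supp R(G)\cap\cN(M)=\emptyset$''.

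There is, however, a genuine gap in your proof of the key lemma for $B=S^{-1}A$. You write ``pick $\phi\in S$ so that $\phi H\in A\llb x\rrb\cdot M=M$'', but an element $H=\sum_k g_kF_k\in S^{-1}A\llb x\rrb\cdot M$ has $g_k\in S^{-1}A\llb x\rrb$, i.e.\ each $g_k$ is a \emph{power series} whose infinitely many coefficients may carry unbounded denominators from $S$; no single $\phi\in S$ need clear them all (take $A=\IZ$, $S=\{2^j\}$, $g=\sum_j x^j/2^j$). The clean fix is truncation: if $H\neq 0$ with $\exp H=(\be_0,j_0)$, choose $N\ge|\be_0|$ and set $H':=\sum_k g_k^{(\le N)}F_k=\sum_{k,\,|\ga|\le N}c_{k,\ga}\,(x^\ga F_k)$, a \emph{finite} $B$-linear combination of the elements $x^\ga F_k\in M$ (here $M$ is an $A\llb x\rrb$-module, so $x^\ga F_k\in M$). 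Then $H$ and $H'$ agree through degree $N$, so $\exp H'=(\be_0,j_0)$. Now clear the finitely many denominators $c_{k,\ga}\in B\subset\IK$ by some $\phi\in A\setminus\{0\}$, giving $\phi H'\in M$ with $\exp(\phi H')=(\be_0,j_0)\in\cN(M)$, contradicting $(\be_0,j_0)\in\De$. Note that this argument works directly for every $B\subset\IK$, so your two-stage reduction (first $S^{-1}A$, then the vaguely sketched ``iterative re-division'' for general $B$) becomes unnecessary.
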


\begin{proof}
(1) (a) is obvious. Let $G \in B\llb x\rrb^p$. Write $G = \sum_{i=1}^q Q_i(G) \Phi_i + R(G)$, according to the
formal division algorithm. Then $G \in B\llb x\rrb\cdot M$ if and only if $R(G) \in M$; i.e., if and 
only if $R(G)=0$ (since $\supp R(G) \bigcap \cN(M) = \emptyset$). (b), (c) follow immediately.

\medskip\noindent
(2) For each $i$, write $x^{(\al_i,j_i)} = \sum_{j=1}^q Q_{ij} \Phi_j - R_i$, according to the division
algorithm. Clearly, we can take $\Psi_i = x^{(\al_i,j_i)} +R_i$, $i=1,\ldots,q$. 
\end{proof}

We call $\Psi_1,\ldots,\Psi_q$ in Corollary \ref{cor:diag} the \emph{standard basis} of $M$ (or of
$S^{-1}A\llb x\rrb\cdot M$). The following is a consequence of Theorem \ref{thm:formaldiv} in
the case $A=\IR$.

\begin{cor}\label{cor:compl}
Let $M$ be a submodule of $\IR\llb x \rrb^p$ and let $r\in\IN$. Then the
set $\left\{x^{(\beta,i)}: (\beta,i)\in\Delta,\,|\beta|\le r\right\}$ is a basis of an $\IR$-linear complement of 
$M+(x)^{r+1}\IR\llb x\rrb^p$ in $\IR\llb x\rrb^p$ (where $\Delta$ is the complement of $\cN(M)$).
\end{cor}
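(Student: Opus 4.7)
The plan is to apply Hironaka's division (Theorem \ref{thm:formaldiv}) with $A=\IR$ and the ordering $\lex(|\al|,j,\al)$, whose first coordinate is total degree, to a standard basis $\Phi_1,\ldots,\Phi_q$ of $M$ furnished by Corollary \ref{cor:diag}. Set $V := \Span\{x^{(\be,i)}:(\be,i)\in\Delta,\,|\be|\le r\}$. I need two things: that $\IR\llb x\rrb^p = V + M + (x)^{r+1}\IR\llb x\rrb^p$, and that the sum is direct, i.e.\ $V\cap\bigl(M+(x)^{r+1}\IR\llb x\rrb^p\bigr)=0$.

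For the first, given any $F\in\IR\llb x\rrb^p$ write $F=\sum Q_i\Phi_i + R(F)$ by Theorem \ref{thm:formaldiv}, where $\supp R(F)\subset\Delta$. Split $R(F)=R_{\le r}+R_{>r}$ by total degree. Then $R_{\le r}\in V$, $R_{>r}\in(x)^{r+1}\IR\llb x\rrb^p$, and $\sum Q_i\Phi_i\in M$, giving the required decomposition.

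For the second, the main technical point is a stability statement: if $H\in(x)^{r+1}\IR\llb x\rrb^p$, then $R(H)\in(x)^{r+1}\IR\llb x\rrb^p$ as well. Granting this, suppose $F\in V$ also lies in $M+(x)^{r+1}\IR\llb x\rrb^p$, so $F=G+H$ with $G\in M$, $H\in(x)^{r+1}\IR\llb x\rrb^p$. Since $\supp F\subset\Delta$, the uniqueness clause of Theorem \ref{thm:formaldiv} applied to the trivial presentation $F = 0\cdot\Phi_1+\cdots+0\cdot\Phi_q+F$ gives $R(F)=F$ and $Q_i(F)=0$. By linearity and uniqueness of division, $R(F)=R(G)+R(H)$; by Corollary \ref{cor:diag}(1)(c) we have $R(G)=0$; and by the stability statement $R(H)\in(x)^{r+1}\IR\llb x\rrb^p$. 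Hence $F=R(H)\in(x)^{r+1}\IR\llb x\rrb^p$, but $F\in V$ involves only monomials of total degree $\le r$, forcing $F=0$.

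The stability statement is where I expect to do the only real work. Since $H\in(x)^{r+1}\IR\llb x\rrb^p$, $\exp H$ has total degree at least $r+1$. Theorem \ref{thm:formaldiv} yields $(\al_i,j_i)+\exp Q_i\ge \exp H$ in our ordering, so comparing first coordinates gives $|\al_i|+|\exp Q_i|\ge r+1$ for each $i$ with $Q_i\neq 0$. Because the ordering on $\IN^n$ compares total degree first, every monomial of $Q_i$ has total degree $\ge|\exp Q_i|$, and every monomial of $\Phi_i$ has total degree $\ge|\al_i|$; therefore every monomial of $Q_i\Phi_i$ has total degree $\ge|\exp Q_i|+|\al_i|\ge r+1$, so $Q_i\Phi_i\in(x)^{r+1}\IR\llb x\rrb^p$. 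Consequently $R(H)=H-\sum Q_i\Phi_i\in(x)^{r+1}\IR\llb x\rrb^p$, which completes the argument.
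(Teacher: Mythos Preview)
Your argument is correct and follows exactly the route the paper intends: the paper simply states that the corollary is ``a consequence of Theorem~\ref{thm:formaldiv} in the case $A=\IR$'' without further detail, and what you have written is a careful unpacking of that claim using the degree-first ordering $\lex(|\al|,j,\al)$ together with Corollary~\ref{cor:diag}(1)(c). The stability step (that $R$ preserves $(x)^{r+1}\IR\llb x\rrb^p$) is the right thing to isolate, and your justification via $(\al_i,j_i)+\exp Q_i\ge\exp H$ is precisely how the last clause of Theorem~\ref{thm:formaldiv} is meant to be used.
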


\begin{cor}\label{cor:AR}
Let $A_1,\ldots A_q \in \IR\llb x\rrb^p$. Let $M\subset \IR\llb x\rrb^p$ denote the module generated
by $A_1,\ldots A_q$; i.e., $M = \Im A$, where $A: \IR\llb x \rrb^q \to \IR\llb x \rrb^p$ is the module
homomorphism given by multiplication by the matrix $A$ with columns $A_1,\ldots A_q$. Let 
$R := \Ker A \subset \IR\llb x\rrb^q$ (the \emph{submodule of relations among the columns of $A$}).
Let $(\al_i,j_i)$, $i=1,\ldots,t$, denote the vertices of $\cN(M)$, and set $\la := \max |\al_i|$. Then:
\begin{enumerate}
\item[(1)] (Artin-Rees Lemma)\quad for all $l \in \IN$,\  \ $M \cap \fm^{l+\la}\cdot \IR\llb x\rrb^p 
= \fm^l\cdot(M \cap \fm^{\la}\cdot \IR\llb x\rrb^p)$;
\item[(2)] (Chevalley estimate)\quad $A^{-1}(\fm^{l+\la}\cdot \IR\llb x\rrb^p) \subset R + \fm^l\cdot\IR\llb x\rrb^q$.
\end{enumerate}
\end{cor}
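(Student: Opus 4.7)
The plan is to derive both statements from Hironaka's formal division algorithm (Theorem \ref{thm:formaldiv}) applied to a choice of representatives $\Phi_1,\ldots,\Phi_t \in M$ of the vertices $(\al_i,j_i)$ of $\cN(M)$, using the order $\lex(|\al|,j,\al)$. The key observation is that, since this order refines the total degree, a nonzero $F \in \IR\llb x\rrb^p$ with $\exp F = (\ga, k)$ satisfies $F \in \fm^m \cdot \IR\llb x\rrb^p$ if and only if $|\ga| \geq m$; in particular each $\Phi_i$ belongs to $\fm^{|\al_i|} \cdot \IR\llb x\rrb^p$.

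For part (1), let $F \in M \cap \fm^{l+\la} \cdot \IR\llb x\rrb^p$. By Theorem \ref{thm:formaldiv} together with Corollary \ref{cor:diag}(1)(c), I write $F = \sum_{i=1}^t Q_i \Phi_i$ with remainder $R(F) = 0$, subject to $\exp Q_i + (\al_i, j_i) \geq \exp F$ in the total order. Comparing first coordinates yields $|\exp Q_i| + |\al_i| \geq l + \la$, hence $|\exp Q_i| \geq l + \la - |\al_i|$, and therefore $Q_i \in \fm^{l+\la-|\al_i|}$. Expanding $Q_i = \sum_{|\be|\geq l+\la-|\al_i|} c_\be x^\be$, I factor each monomial as $x^\be = x^{\be'} x^{\be''}$ with $|\be''| = \la - |\al_i|$ (feasible since $|\be|\geq \la-|\al_i|$) and $|\be'| = |\be| - (\la - |\al_i|) \geq l$. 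Then $x^{\be'} \in \fm^l$ while $x^{\be''}\Phi_i \in M \cap \fm^\la \cdot \IR\llb x\rrb^p$, so $F$ lies in $\fm^l \cdot (M \cap \fm^\la \cdot \IR\llb x\rrb^p)$. The reverse inclusion is immediate.

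For part (2), suppose $G \in \IR\llb x\rrb^q$ with $AG \in \fm^{l+\la}\cdot \IR\llb x\rrb^p$. Since $AG \in M$, part (1) produces a decomposition $AG = \sum_k h_k H_k$ with scalars $h_k \in \fm^l$ and $H_k \in M$. As $M = \Im A$, I pick lifts $g_k \in \IR\llb x\rrb^q$ with $A g_k = H_k$; then $A\bigl(G - \sum_k h_k g_k\bigr) = 0$, so $G - \sum_k h_k g_k \in \Ker A = R$, while $\sum_k h_k g_k \in \fm^l \cdot \IR\llb x\rrb^q$, which yields the claimed inclusion.

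The only subtlety is the passage from the single initial-exponent estimate supplied by the division algorithm to a uniform degree bound on every coefficient of $Q_i$: this relies crucially on the compatibility of the chosen order with total degree, and is exactly what makes the termwise factorization $\be = \be' + \be''$ with $|\be'|\geq l$ and $|\be''| = \la-|\al_i|$ available. Everything else is bookkeeping.
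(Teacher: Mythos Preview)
Your proof is correct and follows essentially the same route as the paper: both parts rest on the division algorithm applied to representatives of the vertices, using that the chosen order refines total degree so that $Q_i \in \fm^{l+\la-|\al_i|}$, and then factoring through $\fm^l\cdot(M\cap\fm^\la\cdot\IR\llb x\rrb^p)$; for part~(2) the paper simply notes the equivalence of the Chevalley estimate with $M\cap\fm^{l+\la}\cdot\IR\llb x\rrb^p\subset\fm^l\cdot M$, which is exactly the lifting argument you wrote out.
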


\begin{proof}
(1) As above, let $\Phi_i$ denote representatives of the vertices $(\al_i,j_i)$.
By Theorem \ref{thm:formaldiv}, if $F\in M$, then $F = \sum_{i=1}^t Q_i \Phi_i$,
where $(\al_i,j_i) + \supp Q_i \subset \De_i$, for each $i$, and, if $F \in \fm^l\cdot\IR\llb x\rrb^p$,
then each $Q_i \subset \fm^{l-|\al_i|}$. Since each $\Phi_i \subset \fm^{|\al_i|}\cdot\IR\llb x\rrb^p$,
we get $M \cap \fm^{l+\la}\cdot \IR\llb x\rrb^p \subset \fm^l\cdot(M \cap \fm^{\la}\cdot \IR\llb x\rrb^p)$.
The reverse inclusion is obvious.

\medskip\noindent
(2) It is easy to see that, for all $l,\la \in\IN$, the Chevalley estimate (2) is equivalent to the condition,
$$
M \cap \fm^{l+\la}\cdot \IR\llb x\rrb^p \subset \fm^l \cdot M,
$$
so the assertion follows from (1).
\end{proof}

We totally order the set of diagrams
\begin{equation}\label{eq:diag}
\cD(n,p):= \{\cN \subset \IN^n\times\{1,\ldots,p\}:\, \cN + \IN^n = \cN\},
\end{equation}
as follows. Given $\cN \in \cD(n,p)$, let $\cV(\cN)$ denote the sequence 
obtained by listing the vertices of $\cN$ in increasing order and completing
this list to an infinite sequence by using $\infty$ for all the remaining terms. If $\cN_1, \cN_2 \in \cD(n,p)$
we say that $\cN_1<\cN_2$ if $\cV(\cN_1)< \cV(\cN_2)$ with respect to the lexicographic ordering on the
set of such sequences. Clearly, $\cN_1\leq\cN_2$ if $\cN_1 \supseteq \cN_2$.

\begin{rmk}\label{rem:diagdiff}
The condition $\cN + \IN^n = \cN$ implies that the set of elements of $\IR\llb x\rrb^p$ with
supports in the complement of $\cN$ is closed under formal differentiation. This simple property
of the diagram will play an important part in the proof of our main theorem (see the proof of
Lemma \ref{lem:GtauWF}).
\end{rmk}

\section{Whitney fields}

\begin{notation}\label{notation}
Let $a \in \IR^n$ or, more generally, let $a\in M$, where $M$ is a $\cC^\infty$ manifold of dimension $n$.
Let $x=(x_1,\ldots,x_n)$ denote the affine coordinates of $\IR^n$, or local coordinates
for $M$ in a neighbourhood of $a$. The Taylor expansion $T^m_a f$ 
of order $m\in\IN$ at $a$ of a $\cC^m$
function $f(x_1,\ldots,x_n)$ is written 
$$
T^m_af(x) 
= \sum_{\substack{\al\in\IN^n\\|\al|\leq m}}\frac{1}{\al!}\left(\frac{\p^{|\al|}f}{\p x^\al}\right)(a)\, x^\al\,.
$$
Likewise, if $m = \infty$, $T_af(x) = T^\infty_af(x) = \sum_{\al\in\IN^n}(\p^{|\al|}f/\p x^\al)(a)x^\al/\al!$.
In particular, the ring of formal power series centred at $a$ is identified with the ring of power series 
$\IR\llb x\rrb = \IR\llb x_1,\ldots,x_n\rrb$.

Given $m\in \IN$ or $m=\infty$, we write $\tT^m_af(x) = T^m_af(x) - f(a)$ (the Taylor expansion
without constant term).
\end{notation}

\begin{defi}\label{def:wh}
A \emph{$\cC^m$-Whitney field} $F$ on a locally closed subset $E$ of $\IR^n$ is a polynomial
$$
F(a,x)=\sum_{|\alpha|\le m} \frac{1}{\alpha!}f_\alpha(a)x^\alpha\,\in\,\cC^0(E)[x]
$$
with continuous coefficients on $E$, such that, for all $\al\in\IN^n$, $|\al|\leq m$, 
$$
\frac{\p^{|\al|}F}{\p x^{\al}}(a,0) - \frac{\p^{|\al|}F}{\p x^{\al}}(b,a-b)=o\left(|a-b|^{m-|\al|}\right)
$$
on $E$; i.e., the quotient by $|a-b|^{m-|\al|}$ of the left-hand side $\to 0$ as $a,b\in E$ 
tend to a point $c\in E$.

Given an $o$-minimal structure, we say that $F$ is a \emph{definable Whitney field}
if the coefficients $f_\al$ are definable.
\end{defi}

We will use the following definable version of the Whitney extension theorem \cite{Whi34}, \cite{Mal67} 
due to Kurdyka and Paw\l{}ucki \cite{KP14} and Thamrongthanyalak \cite{Tha17}.

\begin{thm}\label{thm:ominWhitney}
Let $m, t\in \IN$, $m\leq t$. If $F$ is a definable $\cC^m$-Whitney field on a closed subset $E\subset\IR^n$
(as above), there exists a definable $\cC^m$ function $f:\IR^n\to\IR$ such that $(\p^{|\al|}f/\p x^\al)=f_\al$ on $E$, for all $|\al|\leq m$ (in particular, $f|_E=f_0$) and $f$ is $\cC^t$ on $\IR^n\backslash E$.
\end{thm}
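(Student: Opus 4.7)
The plan is to adapt the classical Whitney extension construction to the definable setting. Classical Whitney proceeds by decomposing $\IR^n \setminus E$ into regions whose diameter is comparable to their distance to $E$, choosing a basepoint in $E$ near each region, and gluing the local Taylor polynomials by a partition of unity; the output is a $\cC^m$ extension on $\IR^n$ that is $\cC^\infty$ off $E$. In the definable setting the same strategy works, but (i) all ingredients must be definable, and (ii) one cannot in general produce a definable $\cC^\infty$ partition of unity---only a $\cC^t$ one for each prescribed finite $t$. This is the source of the $\cC^t$ regularity off $E$ in the statement.

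The first step I would carry out is producing a continuous definable ``anchor'' $\sigma : \IR^n \setminus E \to E$ satisfying $|x - \sigma(x)| \leq C\, d(x)$, where $d(x) := \dist(x, E)$ is continuous and definable. This follows from o-minimal cell decomposition of $\IR^n \setminus E$ together with a definable continuous selection for the (lower semicontinuous) multifunction $x \mapsto \{y \in E : |y - x| \leq 2\, d(x)\}$ on each cell, which is available in any o-minimal expansion of $(\IR, +, \cdot)$.

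The second, technically most delicate step is constructing a definable $\cC^t$ partition of unity whose supports are at scale $d(x)$ and whose derivatives satisfy $|\partial^\al \phi_j(x)| \leq C_\al\, d(x)^{-|\al|}$ for $|\al| \leq t$. In any o-minimal expansion of $(\IR, +, \cdot)$ there are explicit definable $\cC^t$ cutoff functions built from polynomial recipes such as $\rho(s) = s^{t+1}(1-s)^{t+1}$ on $[0,1]$. These are used, together with cell decomposition and a definable stratification into pieces of controlled scale, to produce a finite definable family $\{\phi_j\}$ of $\cC^t$ functions summing to $1$ on $\IR^n \setminus E$, with associated anchors $\sigma_j$. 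The candidate extension is
\begin{equation*}
f(x) := \begin{cases} \sum_j \phi_j(x)\, F(\sigma_j(x),\, x - \sigma_j(x)), & x \notin E,\\ f_0(x), & x \in E, \end{cases}
\end{equation*}
which is definable and $\cC^t$ on $\IR^n \setminus E$ by construction.

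The last step, which is the main analytic obstacle, is showing that $f$ is $\cC^m$ at every $c \in E$ with $\partial^\al f(c) = f_\al(c)$ for $|\al| \leq m$. This is the standard Whitney estimate: using the Whitney field condition on $F$ to compare Taylor polynomials at $\sigma_j(x)$ and at $c$ (both within $O(d(x))$ of $x$), together with the derivative bounds on the $\phi_j$ and Leibniz, one obtains $|\partial^\al f(x) - f_\al(c)| = o(|x - c|^{m - |\al|})$ as $x \to c$, for $|\al| \leq m$. The estimates are essentially those of the classical Whitney proof, only rearranged to accommodate a partition of unity attached to a definable cell decomposition rather than to a translation-invariant cube decomposition. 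This is the content of the cited constructions of Kurdyka--Paw\l{}ucki and Thamrongthanyalak.
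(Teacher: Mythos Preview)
The paper does not give its own proof of this theorem; it is stated as a result due to Kurdyka--Paw\l{}ucki \cite{KP14} and Thamrongthanyalak \cite{Tha17} and then used as a black box. Your sketch is a fair outline of the strategy in those references, so in that sense there is nothing to compare against.

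That said, one point in your outline is underspecified in a way that hides the main idea. In step two you speak of ``a finite definable family $\{\phi_j\}$'' obtained from ``a definable stratification into pieces of controlled scale''. The classical Whitney decomposition into dyadic cubes is infinite and not definable, so the honest content of this step is a geometric lemma: $\IR^n\setminus E$ admits a \emph{finite} partition into definable $\cC^t$ cells $C$ each of which is \emph{$\Lambda$-regular}, meaning that locally the diameter of $C$ is bounded by a fixed multiple of $\dist(C,E)$. This is precisely the definable substitute for Whitney cubes in \cite{KP14}, and it is what makes the whole construction go through with finitely many pieces and hence a definable partition of unity. Your phrase ``controlled scale'' points at this, but the $\Lambda$-regular decomposition is the nontrivial ingredient and deserves to be named.

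A minor correction: you do not need the anchor $\sigma$ to be globally continuous. A definable (possibly discontinuous) choice of near-closest point on each cell is enough, since continuity and the $\cC^m$ estimates across $E$ are produced by the partition of unity, not by $\sigma$. Insisting on a continuous definable selection is an unnecessary complication.
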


Given $F(a,x) = \sum_{|\alpha| \le m} f_\alpha(a)x^\alpha/\al! \in \cC^0(E)[x]$ and $k\leq m$, we write
$F^k(a,x) = \sum_{|\alpha| \le k} f_\alpha(a)x^\alpha/\al! $. The following result can be understood as a converse of the fact that Taylor expansion commutes with differentiation. 

\begin{lemma}[Borel's lemma with parameter {\cite[\S10]{BM87-2}}]\label{lem:borel}
Let $\La$ denote a $\cC^m$-submanifold of $\IR^n$ and let
$F \in \cC^0(\La)[x]$, $\deg F \leq m$. Then $F$ is a $\cC^m$ Whitney field on $\La$ if and only if
$F^{m-1} \in \cC^1(\La)[x]$ and, for all $a\in \La$ and $u\in T_a\La$ (where $T_a\La$ denotes
the tangent space of $\La$ at $a$),
$$
D_{a,u}F^{m-1}(a,x) = D_{x,u} F(a,x),
$$
where $D_{a,u}$ denotes the directional derivative with respect to $a$ in the direction $u$,
and $D_{x,u}$ denotes the (formal) directional derivative with respect to $x$ in the direction $u$.
\end{lemma}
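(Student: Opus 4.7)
The approach I take is to localize and straighten: near any $c\in\La$, I apply a $\cC^m$ change of ambient coordinates turning $\La$ into an open piece of the flat subspace $\IR^d\times\{0\}\subset\IR^n$ (with $d=\dim\La$). Then $T_a\La=\IR^d\times\{0\}$ for every $a\in\La$, the commuting relation reduces to $\partial f_\alpha/\partial a_j = f_{\alpha+e_j}$ for $j=1,\ldots,d$ and $|\alpha|\leq m-1$, and in the Whitney-field condition at index $\alpha$ only multi-indices $\gamma\in\IN^n$ supported in the first $d$ coordinates contribute, since $(a-b)^\gamma=0$ for the rest. I verify both implications in this flat local picture.

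For the forward implication, the $\cC^m$-Whitney condition at any index $\alpha$ of order $\leq m-1$ gives, for $a,b\in\La$,
\[
f_\alpha(a)-f_\alpha(b)-\sum_{i=1}^d(a_i-b_i)\,f_{\alpha+e_i}(b)=o(|a-b|),
\]
so $f_\alpha$ is differentiable at every $b$ with gradient $(f_{\alpha+e_j}(b))_{j\leq d}$; continuity of the $f_{\alpha+e_j}$ then upgrades this to $f_\alpha\in\cC^1(\La)$, whence $F^{m-1}\in\cC^1(\La)[x]$. The commuting identity $D_{a,u}F^{m-1}(a,x)=D_{x,u}F(a,x)$ for $u\in T_a\La$ falls out by a direct calculation, using the combinatorial identity $(\beta_i+1)/(\beta+e_i)!=1/\beta!$ after the reindexing $\alpha=\beta+e_i$.

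For the backward implication, I fix $\alpha\in\IN^n$ with $|\alpha|\leq m$, set $k:=m-|\alpha|$, and define the reduced jet $g_\beta:=f_{\alpha+\beta}$ for $\beta\in\IN^d$ (embedded in $\IN^n$ by trailing zeros), $|\beta|\leq k$. By hypothesis $g_\beta\in\cC^1(\La)$ with $\partial g_\beta/\partial a_j=g_{\beta+e_j}$ for $|\beta|\leq k-1$ and $j\leq d$, while $g_\beta$ is continuous for $|\beta|=k$. A short induction on $l=0,1,\ldots,k$, invoking Schwarz's theorem on equality of mixed partials at each level, shows $g_0\in\cC^k(\La)$ with $\partial^\beta g_0=g_\beta$ for all $|\beta|\leq k$, so Taylor's theorem applied to $g_0$ at $b$ yields
\[
g_0(a)-\sum_{\substack{\beta\in\IN^d\\|\beta|\leq k}}\frac{1}{\beta!}g_\beta(b)(a-b)^\beta=o(|a-b|^k).
\]
By the support observation from the first paragraph, this is exactly the Whitney condition at index $\alpha$. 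Letting $\alpha$ range over all $|\alpha|\leq m$ completes the argument.

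The main technical point is the invariance of both sides of the equivalence under the straightening $\cC^m$ change of ambient coordinates, which legitimizes the reduction to the flat case. This amounts to a routine (but tedious) chain-rule computation on the Taylor polynomials $F(a,\cdot)$; alternatively, one can bypass it by appealing to Whitney's classical extension theorem, which converts any $\cC^m$-Whitney field on $\La$ into a genuine $\cC^m$ function on a neighbourhood of $\La$ in $\IR^n$, whose Taylor jets transform correctly under $\cC^m$ diffeomorphisms.
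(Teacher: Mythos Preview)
Your proposal is correct and follows exactly the approach indicated in the paper: the paper's proof consists of the single sentence ``It is enough to assume that $\Lambda=\IR^k\times\{0\}$, and the result is then straightforward.'' You have carried out precisely this reduction to the flat model and supplied the details the paper omits---the $\cC^1$-regularity from the first-order Whitney condition in the forward direction, and the inductive bootstrap to $\cC^k$-regularity plus Taylor's theorem in the backward direction---so there is nothing to add.
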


\begin{proof}
It is enough to assume that $\Lambda=\IR^k\times\{0\}$, and the result is then straightforward.
\end{proof}

Let $\rho$ be a positive integer. We say that a closed subset $A$ of $\IR^n$ is \emph{$\rho$-regular}
if any point of $A$ admits a neighbourhood $U$ in $\IR^n$ with the property that there exists 
$C>0$ such that any two points $a,b\in A\cap U$ can be joined by a rectifiable curve in $A$
of length $\leq C |a-b|^{1/\rho}$. 

\begin{lemma}\label{lem:rreg}
Given an expansion of the real field by restricted quasianalytic functions,
any definable closed subset of $\IR^n$ is $\rho$-regular, for some $\rho$.
\end{lemma}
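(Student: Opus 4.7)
The plan is a cell-decomposition / pancake-type argument in the polynomially bounded $o$-minimal structure $\IR_\cC$ (Remark \ref{rem:model}), where the crucial ingredient is the definable {\L}ojasiewicz inequality with rational exponent.

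First I would fix $p\in A$ and a compact definable neighbourhood $V$ of $p$, and apply a $\cC^\infty$ cell decomposition compatible with $A$ to write $A\cap V$ as a finite disjoint union of $\cC^\infty$ definable cells $C_1,\dots,C_N$, each a $\cC^\infty$ submanifold of $\IR^n$ admitting a $\cC^\infty$ definable parametrization $\psi_i:U_i\to C_i$. For two points $a,b$ in the same cell $C_i$, I would bound the length of the image under $\psi_i$ of a short definable curve in $U_i$ joining their preimages; this length is the integral of $\|D\psi_i\|$, whose behaviour near $\p U_i$ is controlled by the definable {\L}ojasiewicz inequality. Polynomial boundedness of $\IR_\cC$ forces the exponent to be rational, so the resulting estimate has the form $C|a-b|^{1/\rho_i}$ with $\rho_i\in\IN$.

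Second, I would handle pairs $a\in C_i$, $b\in C_j$ with $i\neq j$ by induction on $\dim A$: the union of cells of dimension $<\dim A$ is a closed definable set of strictly smaller dimension, hence $\rho'$-regular by the inductive hypothesis. To splice the pieces together, I use definable curve selection inside $C_i$ and $C_j$ to reach points of $\bar C_i\cap\bar C_j$, with the lengths of those pieces controlled by a {\L}ojasiewicz-type inequality $\dist(a,\bar C_i\setminus C_i)\leq C|a-b|^{1/\rho''}$, and then paste on the $\rho'$-regular path within the lower-dimensional stratum. A common multiple of all the integer exponents appearing gives a single $\rho$ valid on $A\cap V$; covering a bounded portion of $A$ by finitely many such neighbourhoods then yields the local statement.

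The main obstacle is the first step: converting smoothness of a cell into a uniform H\"older control of arc length, since $\psi_i$ may have derivatives blowing up at $\p U_i$. Polynomial boundedness of $\IR_\cC$ is essential here, as it ensures that all {\L}ojasiewicz exponents in play are rational (and hence admit a common denominator $\rho\in\IN$); in a non-polynomially-bounded o-minimal structure such as $\IR_{\exp}$, no such $\rho$ exists in general. For closed subanalytic sets this is the content of a theorem of Kurdyka, and the adaptation to the quasianalytic setting proceeds along the same lines.
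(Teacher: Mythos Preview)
Your sketch outlines a valid alternative to the paper's argument, but the paper takes a different route. It does not work through cell decomposition at all; it simply invokes the uniformization theorem (resolution of singularities in the quasianalytic class, already needed elsewhere---see Remark~\ref{rem:model}) together with {\L}ojasiewicz's inequality, and refers to \cite[Thm.\,6.10]{BM88-ihes} for the identical argument in the subanalytic case. In that approach one writes $A=\vp(M)$ for a proper $\cC^\infty$ definable map from a manifold $M$, joins preimages in $M$ by short curves, and controls the distortion of lengths under $\vp$ via {\L}ojasiewicz.

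Your route via $\cC^\infty$ cell decomposition and induction on dimension is closer to Kurdyka's L-regular (``pancake'') decomposition and its o-minimal descendants. It has the virtue of avoiding resolution of singularities, relying only on cell decomposition and the polynomially bounded {\L}ojasiewicz inequality, both available in $\IR_\cC$; the paper's approach is economical mainly because uniformization is already in play for the reduction of Theorem~\ref{thm:main} to Theorem~\ref{thm:varphi}. One point to tighten in your sketch: the gluing step (reaching $\overline{C_i}\cap\overline{C_j}$ from $a\in C_i$, $b\in C_j$) tacitly assumes the frontier condition and that this intersection is nonempty and near $a,b$, so you should refine the cell decomposition to a stratification first; likewise the ``union of lower-dimensional cells'' used in the induction is closed only under the frontier condition. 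With those standard adjustments the argument goes through.
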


This follows from resolution of singularities (more precisely, from the uniformization theorem)
and {\L}ojasiewicz's inequalities, following the argument given in \cite[Thm.\,6.10]{BM88-ihes}
for the subanalytic case.

\begin{lemma}[l'H\^opital--Hestenes lemma {\cite[Prop.\,3.4]{BMP96}}]\label{lem:hestenes} 
Let $B\subset A$ denote closed subsets of $\IR^n$, where $A$ is $\rho$-regular. Let $m\in\IN$
and let $F(a,x) = \sum_{|\al|\leq m\rho} f_\al(a)x^\al/\al! \in \cC^0(A)[x]$. Assume that 
$F$ restricts to $\cC^{m\rho}$ Whitney fields on $B$ and on $A\backslash B$. Then
$F^m(a,x) = \sum_{|\al|\leq m} f_\al(a)x^\al/\al!$ is a $\cC^m$ Whitney field on $A$.
\end{lemma}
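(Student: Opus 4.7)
The plan is to verify, for each multiindex $|\al|\leq m$ and each $c\in A$, the Whitney condition
$$f_\al(a)-\sum_{|\be|\leq m-|\al|}\frac{f_{\al+\be}(b)(a-b)^\be}{\be!}=o(|a-b|^{m-|\al|})$$
as $a,b\in A$ tend to $c$. If $c\in A\setminus B$, then since $B$ is closed in $A$ the points $a,b$ eventually lie in $A\setminus B$, so the estimate follows directly from the $\cC^{m\rho}$ Whitney hypothesis on $A\setminus B$; truncating the Taylor sum from degree $m\rho$ back to degree $m$ only discards terms of order $O(|a-b|^{m-|\al|+1})$. The substantive case is $c\in B$, for which I would use Lemma~\ref{lem:rreg} to join $a,b\in A$ by a rectifiable curve $\gamma:[0,L]\to A$ of length $L\leq C|a-b|^{1/\rho}$.

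Set $P_p(x):=\sum_{|\be|\leq m\rho}f_\be(p)(x-p)^\be/\be!$. The strategy is to telescope along $\gamma$. Choose a partition $a=p_0,\,p_1,\ldots,p_N=b$ of $\gamma$ so that each consecutive pair $(p_j,p_{j+1})$ lies entirely in $B$ or entirely in $A\setminus B$; in the intended definable setting $\gamma$ can be taken definable, so $\gamma^{-1}(B)$ is a finite union of intervals by $o$-minimality and such a partition exists with $N$ bounded. Then
$$\frac{\p^{|\al|}}{\p x^\al}[P_a-P_b](a)=\sum_{j=0}^{N-1}\frac{\p^{|\al|}}{\p x^\al}[P_{p_j}-P_{p_{j+1}}](a).$$
On each piece the $\cC^{m\rho}$ Whitney condition on the relevant side gives
$$\frac{\p^{|\al|+|\eta|}}{\p x^{\al+\eta}}[P_{p_j}-P_{p_{j+1}}](p_j)=o\!\left(|p_j-p_{j+1}|^{m\rho-|\al|-|\eta|}\right)\quad\text{for }|\al|+|\eta|\leq m\rho,$$
and a Taylor expansion of the polynomial $\frac{\p^{|\al|}}{\p x^\al}[P_{p_j}-P_{p_{j+1}}]$ about $p_j$, together with $|a-p_j|\leq L$, transfers this to $\frac{\p^{|\al|}}{\p x^\al}[P_{p_j}-P_{p_{j+1}}](a)=o(L^{m\rho-|\al|})$. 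Summing the bounded number of pieces and using $L\leq C|a-b|^{1/\rho}$ produces
$$\frac{\p^{|\al|}}{\p x^\al}[P_a-P_b](a)=o\!\left(|a-b|^{(m\rho-|\al|)/\rho}\right)=o\!\left(|a-b|^{m-|\al|/\rho}\right),$$
which is $o(|a-b|^{m-|\al|})$ since $|\al|/\rho\leq|\al|$. Cutting $P_a,P_b$ back from degree $m\rho$ to degree $m$ contributes only $O(|a-b|^{m-|\al|+1})$, completing the required estimate.

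The main obstacle is the interplay of the two separate Whitney hypotheses near a point $c\in B$: the hypothesis on $A\setminus B$ controls pairs staying in $A\setminus B$ and tending to a point of $A\setminus B$, so it cannot be invoked directly either for pairs straddling $B$ or for limits into $c\in B$. The curve-telescoping bypasses this by ensuring that each consecutive pair $(p_j,p_{j+1})$ remains on one side of $B$, while the $\rho$-regularity closes the quantitative gap: the loss of differentiability $m\to m\rho$ in the hypothesis is exactly matched by the exponent $1/\rho$ in the length bound $L\leq C|a-b|^{1/\rho}$, giving equality in the threshold case $|\al|=0$. A secondary technicality is that the partition must have $N$ bounded independently of $a,b$; in the definable context this is automatic from $o$-minimality applied to $\gamma^{-1}(B)$.
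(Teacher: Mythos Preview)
The paper does not supply its own proof of this lemma; it simply cites \cite[Prop.\,3.4]{BMP96}. So there is no in-paper argument to compare against, but your proposal does contain a genuine gap.

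The problem is your assertion that ``on each piece the $\cC^{m\rho}$ Whitney condition on the relevant side gives $\frac{\p^{|\al|+|\eta|}}{\p x^{\al+\eta}}[P_{p_j}-P_{p_{j+1}}](p_j)=o(|p_j-p_{j+1}|^{m\rho-|\al|-|\eta|})$''. For a piece with $p_j,p_{j+1}\in A\setminus B$ this is not justified: the Whitney hypothesis on $A\setminus B$ says only that such remainders are $o(|p_j-p_{j+1}|^{m\rho-|\al|-|\eta|})$ as $p_j,p_{j+1}$ tend to a point \emph{of $A\setminus B$}. In your telescoping the points tend to $c\in B$, and nothing in the hypotheses makes the little-$o$ uniform up to $B$. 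You flag exactly this obstacle in your last paragraph, but putting each consecutive pair on one side of $B$ does not address it --- the limit point is still on the wrong side. (A secondary issue: the lemma as stated carries no definability hypothesis, so invoking $o$-minimality to bound $N$ is not available.)

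The argument of \cite{BMP96} proceeds instead by downward induction on $|\al|$. Setting $g_\al(a):=f_\al(a)-\p^\al P_c(a)$, one shows $g_\al(a)=o(L^{m\rho-|\al|})$, where $L$ is the length of a near-geodesic $\ga$ in $A$ from $c$ to $a$. For the inductive step one writes
\[
g_\al(a)=g_\al(\ga(t^*))+\int_{t^*}^{L}\sum_i g_{\al+e_i}(\ga(u))\,\ga_i'(u)\,du,
\]
where $t^*$ is the last time $\ga$ meets $B$. The boundary term is controlled by the Whitney hypothesis on $B$ (since $\ga(t^*)\in B$), and the integrand by the inductive hypothesis, which is already a statement uniform on all of $A$ near $c$. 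The Whitney hypothesis on $A\setminus B$ enters only \emph{qualitatively}, to justify the fundamental-theorem-of-calculus identity on compact sub-arcs of $\ga$ lying inside $A\setminus B$; it is never used as a quantitative estimate near $B$, which is precisely where your argument needs it.
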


\section{Stratification of a definable mapping and fibre product}\label{sec:strat}
Theorem \ref{thm:strat} below, concerning stratification of a definable mapping, 
is a simple consequence of $\cC^\infty$ cell decomposition. There is a stronger
\emph{local trivialization theorem} (cf. \cite{Now11}), which generalizes results of
Hardt for the semialgebraic or subanalytic case \cite{Har76,Har80}, but the
stronger assertion will not be needed here.

Let $E$ denote a subset of $\IR^n$ which is definable in a given $o$-minimal structure.
A \emph{definable stratification} of $E$ is a partition $\Ss$ of $E$ 
(locally finite in $\IR^n$) into connected definable $\cC^\infty$ submanifolds,
called \emph{strata}, such that, for each
$S\in \Ss$, the boundary $(\overline{S}\backslash S) \cap E$ is a union of strata of dimension
$< \dim S$. A partition $\P$ of $E$ is \emph{compatible} with a family $\Fs$ of subsets of $E$
if, for each $P\in \P$ and $F\in\Fs$, either $P\subset F$ or $P\subset E\backslash F$. 

\begin{thm}\label{thm:strat}
Let $\cR$ denote an $o$-minimal structure which admits $\cC^\infty$ cell decomposition.
Let $E$ denote a definable subset of $\IR^N$, and $\vp: E\to \IR^n$ a continuous definable
mapping. Put $X=\vp(E)$. Let $\Fs$ and $\Gs$ denote finite families of definable subsets of $E$ and
$X$, respectively. Then there are finite definable stratifications $\Ss$ and $\Ts$ of $E$ and $X$,
respectively, such that
\begin{enumerate}
\item[(1)]For each $S\in \Ss$, $\vp(S) \in \Ts$ and there is a commutative diagram
 $$
 \xymatrix{S \ar[rr]^h \ar[dr]_{\varphi|_{S}} & & T\times P \ar[ld]^{\pi} \\ & T &}
 $$
 where $T=\vp(S)$, $P$ is a connected definable $\cC^\infty$ submanifold of $\IR^s$, for some $s$, $h$ is
 a $\cC^\infty$ definable isomorphism, and $\pi$ denotes the projection;
 \item[(2)] $\Ss$ is compatible with $\Fs$ and $\Ts$ is compatible with $\Gs$.
 \end{enumerate}
\end{thm}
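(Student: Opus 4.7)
The plan is to derive the theorem from the $\cC^\infty$ cell decomposition hypothesis on $\cR$, applied to the graph of $\vp$ inside the product $\IR^n\times\IR^N$.

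Set $\Gamma:=\{(y,z)\in\IR^n\times\IR^N:z\in E,\ y=\vp(z)\}$, a definable set. The natural coordinate projections restrict to continuous definable bijections $\pi_y:\Gamma\to X$ and $\pi_z:\Gamma\to E$ with $\vp\circ\pi_z=\pi_y$. I would then apply a $\cC^\infty$ cell decomposition of $\IR^{n+N}$, with the standard projection onto the first $n$ coordinates, compatible with $\Gamma$, with $\IR^n\times F$ for each $F\in\Fs$, with $G\times\IR^N$ for each $G\in\Gs$, and with $X\times\IR^N$. By the usual properties of cell decomposition this induces a finite cell decomposition of $\IR^n$ compatible with $\Gs$ and $X$. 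Define $\Ts$ to consist of the cells of $\IR^n$ lying in $X$, and $\Ss:=\{\pi_z(C):C\text{ a cell with }C\subset\Gamma\}$. Both are finite. Since the ambient decomposition is compatible with each $\IR^n\times F$, and since $\pi_z$ identifies $(\IR^n\times F)\cap\Gamma$ with $F\cap E$, the stratifications are compatible with $\Fs$ and $\Gs$ respectively.

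For each cell $C\subset\Gamma$ the base $T:=\pi_y(C)$ is a cell of $\IR^n$ contained in $\pi_y(\Gamma)=X$, so $T\in\Ts$, and $\vp(S)=T$ for $S:=\pi_z(C)$. The commutative diagram is produced by two standard ingredients of the $\cC^\infty$ cell calculus. First, because $C$ is built inductively over $T$ by $\cC^\infty$ definable graph and band cells in the last $N$ coordinates, one obtains a $\cC^\infty$ definable diffeomorphism $h_C:C\to T\times P$, with $P\subset\IR^s$ a cell for some $s\leq N$, satisfying $\pi_T\circ h_C=\pi_y|_C$; this is proved by induction on the number of fibre coordinates, a graph step contributing a trivial factor and a band step being affinely rescaled to $(0,1)$ using its $\cC^\infty$ bounding functions. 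Second, the inclusion $C\subset\Gamma$ forces $\pi_z|_C:C\to S$ to be a bijective $\cC^\infty$ immersion of definable locally closed sets, hence a $\cC^\infty$ definable isomorphism, so $S$ is automatically a $\cC^\infty$ submanifold of $\IR^N$. Composing $h_C$ with $(\pi_z|_C)^{-1}$ gives $h:S\to T\times P$ with $\pi\circ h=\vp|_S$, as required.

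The main obstacle is the cylinder trivialization $h_C:C\cong T\times P$ over its base: although folklore in the $\cC^\infty$ $o$-minimal cell calculus, carrying it out requires choosing definable $\cC^\infty$ rescaling functions at each band step (including possibly semi-infinite or infinite bands). This is routine but uses the $\cC^\infty$ cell decomposition hypothesis on $\cR$ in an essential way; the rest of the argument is bookkeeping about compatibilities of the chosen cell decomposition.
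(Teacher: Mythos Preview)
Your overall strategy coincides with the paper's: pass to the graph $\Gamma$ of $\vp$ in $\IR^n\times\IR^N$, apply $\cC^\infty$ cell decomposition with the first $n$ coordinates as base, and push the cells contained in $\Gamma$ down to $E$ via $\pi_z$. The gap is in your claim that ``the inclusion $C\subset\Gamma$ forces $\pi_z|_C:C\to S$ to be a bijective $\cC^\infty$ immersion''. Bijectivity is clear (since $\pi_z$ is injective on $\Gamma$), but the immersion assertion is false in general: the cell structure is adapted to the projection $\pi_y$ onto the \emph{first} $n$ coordinates, and says nothing about the transverse projection $\pi_z$.

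Take $N=n=1$, $E=\IR$, and $\vp(z)=z^{1/3}$ (continuous and semialgebraic). Then $\Gamma=\{(y,y^3):y\in\IR\}$ is the graph of the $\cC^\infty$ function $y\mapsto y^3$ over the base cell $T=\IR$, hence a single cell $C$ in any cell decomposition of the kind you describe. The tangent line to $C$ at the origin is $\IR\times\{0\}$, so $d(\pi_z|_C)$ vanishes there and $\pi_z|_C$ is not an immersion. Concretely, your composite $h=h_C\circ(\pi_z|_C)^{-1}:S\to T$ is the map $z\mapsto z^{1/3}$, which is not a $\cC^\infty$ isomorphism, so condition~(1) of the theorem fails.

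The paper repairs this by imposing one additional requirement on the cell decomposition $\Cs'$ of $E'=\Gamma$: that the projection $\pi'$ onto the last $N$ coordinates have constant rank equal to $\dim C$ on every cell $C\in\Cs'$. This is achievable by a further definable refinement, since $\pi'$ is a bijection on $\Gamma$ and hence has generic rank $\dim C$ on each cell. With this rank condition in place, $\pi_z|_C$ is an injective definable $\cC^\infty$ immersion and the rest of your argument goes through. (The paper also organizes the two compatibilities slightly differently---first decomposing $E'$, then choosing a stratification $\Ts$ of $X$ compatible with $\Gs$ and with all projected cells, and finally pulling $\Ts$ back into the cells---but that is a minor bookkeeping difference.)
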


A pair $(\Ss,\Ts)$ of definable stratifications as in (1) will be called a \emph{definable stratification} of $\vp$.
If we weaken this definition by allowing $\Ss$ to be any finite partition into connected definable $\cC^\infty$
submanifolds (but still requiring $\Ts$ to be a stratification), then the pair $(\Ss,\Ts)$ will be called a
\emph{(definable) semistratification} of $\vp$.

\begin{proof}[Proof of Theorem \ref{thm:strat}]
Let $E' \subset \IR^n\times\IR^N$ denote the image of $E$ by the mapping $x\mapsto (\vp(x),x)$
(the graph of $\vp$), and let $\Fs '$ denote the family of definable subsets of $E'$ corresponding
to $\Fs$. Let $\vp': E' \to \IR^n$ and $\pi': E' \to \IR^N$ denote the definable mappings induced by
the projections $\IR^{n+N} \to \IR^n$ onto the first $n$ coordinates, and $\IR^{n+N} \to \IR^N$ onto the 
last $N$ coordinates, respectively. There is a finite $\cC^\infty$ cell decomposition $\Cs'$
of $E'$ compatible with $\Fs'$, such that $\pi'$ has constant rank $= \dim C$ on each cell $C \in \Cs'$.

The projection $\vp'(C)$ of each cell $C \in \Cs'$ to $\IR^n$ is a cell in $\IR^n$. Let $\Ts$ denote a 
finite definable stratification (or $\cC^\infty$ cell decomposition) of $X$ compatible with $\Gs$ and
all projected cells. We can define a stratification $\Ss'$ of $E'$ so that $\Ss',\,\Ts$ satisfy the
conclusion of the theorem for $\vp'$, by taking the inverse images $S'$ in all cells of $\Cs'$, of
the strata of $\Ts$. Let $\Ss$ denote the corresponding stratification of $E$. Then $\Ss,\,\Ts$ satisfy
the conclusion of the theorem for $\vp$.
\end{proof}

\begin{rmks}\label{rmks:strat}
(1) If $(\Ss,\Ts)$ is a definable semistratification of $\vp$, and $Y$ is a definable subset of $X$
such that $\Ts' = \{T\cap Y: T\in\Ts\}$ is a stratification of $Y$, then the pair $(\Ss',\Ts')$, where
$\Ss' = \{S\cap \vp^{-1}(Y): S\in\Ss\}$, is a semistratification of $\vp|_{\vp^{-1}(Y)}: \vp^{-1}(Y)\to Y$.

\medskip\noindent
(2) Although Theorem \ref{thm:varphi} does not have $\cC^\infty$ cell decomposition as a
hypothesis, Theorem \ref{thm:strat} can be used in the proof, as explained in Remark
\ref{rem:model}. It seems worth noting that any $o$-minimal structure admits $\cC^r$ 
cell decomposition, and satisfies the $\cC^r$ version of Theorem \ref{thm:strat}.
\end{rmks}

\begin{defrmk}\label{def:fibre}
Let $\vp: E\to\IR^n$ be a continuous definable mapping, where $E$ is a definable subset
of $\IR^N$. For each $s\in\IN$, $s\geq 1$, we define the
\emph{$s$-fold fibre product of $E$ with respect to $\varphi$} as 
$$
E_\varphi^s \coloneq \left\{(a_1,\ldots,a_s)\in E^s\,: \varphi(a_1)=\cdots=\varphi(a_s)\right\}.
$$ 
Clearly, $E_\varphi^s$ is a definable subset of $(\IR^N)^s$, and there is a natural mapping
$\Phi: E_\varphi^s \to \IR^n$ defined by $\Phi(a_1,\ldots,a_s) = \vp(a_1)$. Suppose that
$(\Ss,\Ts)$ is a semistratification of $\vp$. Let $\Ss^{(s)}$ denote the family of all
nonempty sets of the form $\left(S_1\times\cdots\times S_s\right)\cap E_\varphi^s$, where
$S_1,\ldots,S_s \in \Ss$. It is easy to see (using Remarks \ref{rmks:strat}(1)) that
$(\Ss^{(s)},\Ts)$ is a semistratification of $\Phi$.
\end{defrmk}

\begin{lemma}\label{lem:finite}
Let $M$ denote a $\cC^\infty$ submanifold of $\IR^n$ which is definable in a given $o$-minimal
structure. Then $M$ can be covered by finitely many definable $\cC^\infty$ coordinate charts.
\end{lemma}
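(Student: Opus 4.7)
I would combine $\cC^\infty$ cell decomposition (Theorem~\ref{thm:strat}) with a coordinate projection argument. Let $d = \dim M$. For each $d$-element subset $I \subset \{1, \ldots, n\}$, let $\pi_I \colon \IR^n \to \IR^d$ denote the coordinate projection onto the $I$-coordinates, and set
\[
W_I := \{p \in M : d\pi_I|_{T_pM} \text{ is a linear isomorphism}\}.
\]
Since $M$ is a definable $\cC^\infty$ submanifold, the Gauss map $p \mapsto T_pM$ (with values in the semialgebraic Grassmannian $G(d,n)$) is definable, so each $W_I$ is a definable open subset of $M$. Every $d$-dimensional subspace of $\IR^n$ projects isomorphically onto some coordinate $d$-plane, hence $M = \bigcup_I W_I$ is a finite union indexed by the $\binom{n}{d}$ subsets $I$. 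It therefore suffices to cover each $W_I$ by finitely many definable $\cC^\infty$ charts of the form $(U, \pi_I|_U)$, with $U$ a definable open subset of $W_I$.

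Fix $I$ and set $\pi := \pi_I|_{W_I}$; this is a definable $\cC^\infty$ local diffeomorphism. Its fibres are definable zero-dimensional subsets of $\IR^n$, hence finite (a definable discrete set is finite in an o-minimal structure), with cardinalities uniformly bounded by some $k \in \IN$ by the o-minimal uniform finiteness property for definable families. Apply Theorem~\ref{thm:strat} to $\pi \colon W_I \to \pi(W_I)$, obtaining semistratifications $(\Ss, \Ts)$ such that each stratum $S \in \Ss$ is mapped $\cC^\infty$-diffeomorphically onto $\pi(S) \in \Ts$ (finite fibres and connected strata force the ``vertical factor'' in the trivialisation to be a single point). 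Over each top-dimensional stratum $T \in \Ts$ (open in $\IR^d$, since $\pi$ is an open map), the preimage $\pi^{-1}(T) \cap W_I$ is open in $W_I$ and decomposes into at most $k$ definable ``sheets'', each open in $W_I$ and mapped diffeomorphically onto $T$ by $\pi$. These yield finitely many definable $\cC^\infty$ charts whose union is open and dense in $W_I$.

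It remains to cover the definable closed subset $W_I^{\mathrm{low}} \subset W_I$, of dimension $< d$, consisting of points lying over lower-dimensional strata of $\Ts$. At each $p \in W_I^{\mathrm{low}}$ the map $\pi$ is still a local diffeomorphism; hence the connected component of $p$ in $\pi^{-1}(B(\pi(p), r)) \cap W_I$ is a definable open chart of $W_I$ for all sufficiently small $r > 0$. A finite subcover is extracted by cell-decomposing the definable ``injectivity radius'' function on $W_I^{\mathrm{low}}$ (strictly positive, by the local-diffeomorphism property and the bounded fibre cardinality): the finiteness of $\Ts$ together with the uniform bound $k$ ensures that only finitely many such charts are needed. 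Taking the union across all $I$ produces a finite definable $\cC^\infty$ atlas of $M$. The main obstacle is this last extraction: while every point of $W_I^{\mathrm{low}}$ individually admits a definable chart, the proof of finiteness depends crucially on controlling the injectivity radius uniformly, which relies on the uniform bound $k$ and on the finiteness of the stratification. An alternative route, which avoids this point, is to use a definable Whitney stratification of $M$ together with definable tubular neighbourhoods of its strata, each of which is itself a definable $\cC^\infty$ chart.
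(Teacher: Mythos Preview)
Your opening reduction coincides with the paper's: write $M$ as the finite union of the definable open sets $W_I$ on which a coordinate projection $\pi$ is a local diffeomorphism, and then try to split each $W_I$ into finitely many definable open pieces on which $\pi$ is injective.  Up to here the two arguments are the same.

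The gap is exactly where you flag it.  Over the top-dimensional strata of $\Ts$ you correctly obtain finitely many sheets that are charts, but these sheets need not cover the points of $W_I$ lying over lower-dimensional strata; your set $W_I^{\mathrm{low}}$ is a genuine leftover.  Your proposed fix via an ``injectivity radius'' does not close the gap: positivity of a definable function $r(p)>0$ on $W_I^{\mathrm{low}}$ gives, for each $p$, a definable chart, but there is no mechanism for extracting a \emph{finite} subfamily when $W_I^{\mathrm{low}}$ is noncompact.  Cell-decomposing $r$ yields finitely many cells on which $r$ is, say, continuous or monotone, but a single cell can still be an unbounded curve along which $r$ tends to $0$, and no finite collection of balls $B(\pi(p),r(p))$ will cover its image.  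The uniform bound $k$ on fibre cardinality does not help here: it bounds the number of sheets over a fixed base point, not the number of charts needed globally.  The tubular-neighbourhood alternative you mention is also only a sketch; a tubular neighbourhood of a stratum of $W_I^{\mathrm{low}}$ is an open subset of $W_I$, but you would still have to show that $\pi$ is injective on it, which is the same issue again.

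The paper resolves this with a different idea: instead of an arbitrary stratification of the image, take a definable \emph{triangulation} of $\overline{\pi(W_I)}$.  Over each open simplex $\sigma^o$ the preimage splits into finitely many sheets indexed by a finite set $F_\sigma$.  The key observation is combinatorial: because $\pi$ is a local homeomorphism, whenever $\sigma$ is a face of $\tau$ each sheet over $\sigma^o$ lies in the closure of a \emph{unique} sheet over $\tau^o$.  For a fixed pair $(\sigma,a)$ with $a\in F_\sigma$, one then takes the union of the sheet $\theta_\sigma^{-1}(\sigma^o\times\{a\})$ with all the ``adjacent'' sheets over simplices $\tau$ having $\sigma$ as a face.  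This union is a definable open set on which $\pi$ is a homeomorphism, and there are only finitely many pairs $(\sigma,a)$.  The face structure of a triangulation is what makes the gluing work and what replaces your injectivity-radius argument.
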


\begin{proof}
We can assume that $M$ is of pure dimension.
Let $U$ denote the open subset of $M$ consisting of all points where the projection
$\pi: M \subset \IR^n \to \IR^{\dim M}$ has rank $= \dim M$. Then $U$ is definable and $\pi|_U$
is locally a homeomorphism. Moreover, $M$ is covered by finitely many such $U$, corresponding to 
the projections $\pi$ after permutation of the coordinates of $\IR^n$.

It is enough to prove, therefore, that, given a definable mapping $\vp: U \to \IR^k$ which is locally
a homeomorphism, we can cover $U$ by finitely many definable open subsets $U_i$ such that $\vp|_{U_i}$
is a homeomorphism onto its image. 

To see this, set $V := \vp(U)$ and consider a definable
triangulation $\{\s\}$ of the closure $\oV$, compatible with $\oV\backslash V$. For each simplex $\s$,
let $\s^o$ denote the corresponding open simplex (i.e., $\s^o = \s \backslash \bdry \s$). By the
stratification theorem, we can assume that, for each $\s^o \subset V$, there is a definable homeomorphism
$\theta_\s: \vp^{-1}(\s^o) \to \s^o \times F_\s$ commuting with the projections to $\s^o$, where
$F_\s$ is a finite set (by $o$-minimality). Let $a \in F_\s$.
If $\s$ is a face of another simplex $\tau$, then there is a unique $b = b_\tau(a)\in F_\tau$
such that $\theta_\s^{-1}\left(\s^o \times \{a\}\right)$ is the intersection of $\vp^{-1} (\s^o)$ with
the closure of $\theta_\tau^{-1} \left(\tau^o\times \{b_\tau(a)\}\right)$ (since $\vp$ is locally a
homeomorphism). Let $U_{\s,a}$ denote the union of all 
$\theta_\tau^{-1} \left(\tau^o\times \{b_\tau(a)\}\right)$, where $\s$ is a face of $\tau$.
Then $\theta_\s^{-1}\left(\s^o \times \{a\}\right) \,\cup \,U_{\s,a}$ is a definable open neighbourhood of 
$\theta_\s^{-1}\left(\s^o \times \{a\}\right)$ on which $\vp$ is a homeomorphism.
\end{proof}

\section{Proof of the main theorem}\label{sec:main}
Theorem \ref{thm:varphi} is an immediate consequence of the following 
assertion, which will be proved in \S\ref{subsec:ind}, following a preparatory
subsection \S\ref{subsec:main1}.

\begin{thm}\label{thm:formal}
Let $M$ denote a $\cC^\infty$ submanifold of $\IR^N$ (for some $N$)
which is definable in a given polynomially
bounded $o$-minimal structure. 
Let $A(x)$ denote a $p\times q$ matrix whose entries are 
$\cC^\infty$ definable functions $A_{ij}:M\to\IR$, and let $\varphi:M\to\IR^n$ be a proper 
$\cC^\infty$ definable mapping. Then there is a function $r: \IN \to \IN$ with the
following property. If $f: M \to \IR^p$ is $\cC^{r(m)}$ and definable, and the 
system of equations
\begin{equation}\label{eq:varphi2}
f(x) = A(x) g(\vp(x))
\end{equation} 
can be solved formally to order $r(m)$ at each
$b \in \varphi(M)$ (i.e., for each $b$, there exists a (vector-valued) 
polynomial function $P(y)$ of degree $r(m)$
such that 
$f - A\cdot (P\circ\vp)$ vanishes
to order $r(m)$ at every point of $\vp^{-1}(b)$),
then there is a definable $\cC^m$ solution $g$. 
\end{thm}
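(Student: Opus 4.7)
The plan is to construct a definable $\cC^m$-Whitney field $(G_b)_{b \in X}$ on the closed set $X := \varphi(M) \subset \IR^n$ that satisfies, at every $a \in M$,
\[
T^m_a f \;\equiv\; T^m_a A \cdot (G_{\varphi(a)} \circ \tT^m_a\varphi) \pmod{(x-a)^{m+1}},
\]
and then invoke the definable Whitney extension theorem (Theorem \ref{thm:ominWhitney}) to obtain the desired $g$. The argument follows the composite-function theorem of \cite[Thm.~1.2]{BMP96}, the novelty being that the target of the formal composition now carries a module structure generated by the columns of $A$ rather than being a single ring.

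First I would translate the equation into formal algebra. For each $b \in X$ and each $a \in \varphi^{-1}(b)$, define the $\IR$-linear map
\[
\Phi_a \colon \IR\llb y-b\rrb^q \to \IR\llb x-a\rrb^p,\qquad \Phi_a(H) := T_aA \cdot (H\circ T_a\varphi).
\]
The formal hypothesis furnishes, for each $b$, a single truncated series $P$ of degree $\le r(m)$ with $T_af - \Phi_a(P) \in (x-a)^{r(m)+1}\IR\llb x-a\rrb^p$ for \emph{every} $a \in \varphi^{-1}(b)$. The Chevalley estimate for the submodule $\Im\Phi_a \subset \IR\llb x-a\rrb^p$ (Corollary \ref{cor:AR}(2)) converts formal solvability modulo a large power into genuine solvability modulo a smaller power, producing the first source of loss in $r(m)$.

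Next I would stratify. Apply Theorem \ref{thm:strat} to $\varphi$, refined by the fibre-product construction of Definition \ref{def:fibre}, to obtain a definable stratification $(\Ss,\Ts)$ along which the diagrams of initial exponents of $\Im\Phi_a$ and of the analogous modules associated to the $s$-fold fibre-product maps $\Phi \colon M_\varphi^s \to \IR^n$ are locally constant. This is possible because the ordering on $\cD(n,p)$ introduced after \eqref{eq:diag} has no infinite strictly decreasing chains, the diagrams are upper semicontinuous, and $o$-minimality forces stabilization after finitely many steps. Corollary \ref{cor:compl} then furnishes, on each stratum $T \in \Ts$, a canonical $\IR$-linear complement in which the Taylor polynomial of $G_b$ is uniquely and definably determined by $b$; properness of $\varphi$ guarantees that fibres are compact, so that the finite-chart Lemma \ref{lem:finite} together with the fibre-product machinery enforces consistency of $G_b$ across the distinct preimages in $\varphi^{-1}(b)$.

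Build $G$ inductively along $\Ts$ by decreasing stratum dimension, correcting at each step by a field supported on the closures of higher-dimensional strata. On each stratum the candidate $G_b$ is specified by the formal argument above and depends definably on $b$; Borel's lemma with parameter (Lemma \ref{lem:borel}) verifies Whitney compatibility along the stratum, using the $\cC^\infty$ regularity of $f$, $A$, and $\varphi$. To glue across strata of different dimensions one invokes $\rho$-regularity (Lemma \ref{lem:rreg}) and the l'H\^opital--Hestenes lemma (Lemma \ref{lem:hestenes}), which converts separate Whitney regularity of order $m\rho$ on each stratum into Whitney regularity of order $m$ on $X$. Composing these sources of loss (Chevalley, stratumwise induction, Hestenes) defines the function $r(m)$; Theorem \ref{thm:ominWhitney} then delivers the definable $\cC^m$ function $g$.

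The hard part is step three: producing a single formal series $G_b$ at each $b$ that is consistent at \emph{all} points of $\varphi^{-1}(b)$ simultaneously, depends definably on $b$ along strata, and varies Whitney-coherently across $X$. The fibre-product stratification together with the stabilization of the initial-exponents diagram under the ordering on $\cD(n,p)$ are precisely the tools that make this coherence possible; once it is in place, the Chevalley estimate and the l'H\^opital--Hestenes lemma absorb the regularity loss and deliver the required $\cC^m$-Whitney field.
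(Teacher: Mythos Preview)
Your architecture is essentially the paper's, but there is a genuine gap: the \L{}ojasiewicz inequality is missing, and without it the gluing step cannot be completed. On each top-dimensional stratum $\Lambda_\tau$ the canonical representative $G_\tau(b,y)$ is obtained by Cramer's rule from a finite linear system, so its coefficients are quotients $H^{\beta,j}_\tau(\ua)/M_\tau(\ua)$ where $M_\tau$ is a maximal minor of the system matrix (see \eqref{eq:cramer3}). To apply the l'H\^opital--Hestenes lemma you must first know that $G_\tau$ extends continuously by zero to $\overline{\Lambda_\tau}\setminus\Lambda_\tau$; this is Lemma~\ref{lem:loj}, and it is proved by bounding $|M_\tau(\ua)|$ from below by a power of the distance to the boundary (inequality \eqref{eqn:loja}) and balancing that against the flatness of $f$ on $\varphi^{-1}(B')$. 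The \L{}ojasiewicz exponent $\sigma$ is one of the three explicit ingredients in $t(k)$ (see items (1)--(3) after \eqref{eqn:loja}); your list ``Chevalley, stratumwise induction, Hestenes'' omits it, and none of the tools you cite supplies the required decay of the coefficients at the stratum boundary.

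A second, smaller point: you stratify by the diagram of $\Im\Phi_a$, but the object that actually governs the canonical choice of $G_b$ is the module of \emph{relations} $\cR_r(b)\subset\IR\llb y\rrb^q$ (over the whole fibre $\varphi^{-1}(b)$, not a single $a$). Corollary~\ref{cor:compl} gives a complement to $\cR_r(b)$ in $\IR\llb y\rrb^q$, and it is the diagram $\cN(\cR_r(b))$ that is made constant on strata (Lemma~\ref{lem:diag}); the support condition $\supp G_\tau\subset\Delta_\tau$ coming from this complement is exactly what forces $D_{b,v}G^{l-1}_\tau - D_{y,v}G_\tau = 0$ in the Borel-lemma verification (Lemma~\ref{lem:GtauWF}). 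Working with $\Im\Phi_a$ at a single point does not by itself produce a choice consistent across the fibre, nor the support constraint needed in that last step.
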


\begin{rmk}\label{rmk:proper}
Theorem \ref{thm:formal} is not true without some properness assumption on
the mapping $\vp$. The following is an example
involving composition only (i.e., $A(x) = I$). Let $M$ denote the Nash submanifold of
$\IR^2$ given by the union of the curves $y = 1/x$, where $x > 0$, and
$x = -y^2$. Let $\vp: M \to \IR$ be the mapping given by
projection onto the $x$-axis. Then $\vp(M) = \IR$ is closed, but $\vp$ is not
proper. Any $g: \IR \to \IR$ given by the restriction of a $\cC^m$ function
to $(-\infty, 0]$ and any other $\cC^m$ function on $(0, \infty)$ pulls back
to a $\cC^m$ function $f(x,y) = g(\vp(x,y))$ on $M$, but of course $g$ need not be
even continuous. 

The hypothesis that $\vp$ be proper is needed in Proposition \ref{prop:ind} below.

In this context, we note that the composite function theorem of Nowak \cite{Now11} is not correct 
as stated;  \emph{semiproper} is a necessary hypothesis. We recall that, if $\vp: X \to Y$ is a continuous
mapping of Hausdorff spaces, then every continuous function $f:X\to\IR$ which is constant on
the fibres of $\vp$ is a composite $f = g\circ\vp$, where $g$ is continuous, if and only if
$\vp$ is semiproper \cite{BS83}.
\end{rmk}

\subsection{Module of relations}\label{subsec:main1}
The hypotheses and notation of Theorem \ref{thm:formal} will be fixed throughout the
rest of the section. By Lemma \ref{lem:finite}, $M$ admits
a finite covering $\Us$ by $\cC^\infty$ definable coordinate charts $U$ ($M$ may
have components, and therefore coordinate charts, of different dimensions.)

Let $X=\vp(M)\subset \IR^n$. Let $y=(y_1,\ldots,y_n)$ denote the affine coordinates of $\IR^n$.

\begin{defi}
The \emph{module of relations $\cR_r(b)$ of order $r$ at $b\in X$} is defined as 
$$
\R_r(b):=\left\{W\in\IR\llbracket y\rrbracket^q:  \text{for all } a\in\varphi^{-1}(b),\ \ 
T^r_aA(x)\,W(\tT^r_a\varphi(x))\equiv0\ \text{ mod }(x)^{r+1}\IR\llbracket x\rrbracket^p
\right\},
$$ 
where, for each $a\in \vp^{-1}(b)$, $x = (x_1,\ldots,x_{n'})$ denotes the coordinates of a 
chart $U\in \Us$ containing $a$, and $(x)$ denotes the ideal generated by $x_1,\ldots,x_{n'}$.
\end{defi}

Note that this definition is independent of the choice of local coordinates at each $a\in \vp^{-1}(b)$,
since it can be expressed in terms of a module homomorphism over the ring homomorphism
$\hvp^*_a$ induced by $\vp$ between the 
completed local rings of germs of $\cC^\infty$ functions at the points $b$ and $a$, for each
$a\in \vp^{-1}(b)$.

Recall the notation $W(y) = \sum_{(\be,j)} W_{\be,j} y^{\be,j} \in \IR\llb y\rrb^q$, where
$(\be,j)\in \IN^n \times \{1,\ldots,q\}$ (Section \ref{sec:div}).

Clearly, $W\in\R_r(b)$ if and only if its coefficients $W_{\be,j},\, |\be|\leq r$
satisfy a system of linear equations
\begin{equation}\label{eqn:system}
\sum_{\substack{(\beta,j)\\|\beta|\le r}}L_{\alpha,i}^{\beta,j}(a)\,W_{\beta,j}=0,\quad
\text{where }\ \al\in \IN^{n'}, |\al|\leq r,\,i=1,\ldots,p,\, a\in\varphi^{-1}(b)
\end{equation}
(and where we have chosen
a coordinate chart $U$ containing $a$ arbitrarily, for each $a$). Namely, for each
$\al, i$ and $a$, the left-hand side of \eqref{eqn:system} is the $(\al,i)$-coefficient of
the expansion $T^r_aA(x)\,W(\tT^r_a\varphi(x))$.

Set
$$
s := {n+r\choose r}q = \#\{(\be, j): |\be|\leq r\},
$$
and let $\Phi:M_\varphi^s\to X\subset\IR^n$ denote the mapping from the fibre product, as
in Definition \ref{def:fibre}.

Let $(\Ss,\Ts)$ denote a definable stratification of $\vp: M\to \IR^n$, such that $\Ss$ is
compatible with the covering $\Us$ of $M$. Then $(\Ss,\Ts)$ induces a definable
semistratification $(\Ss^{(s)},\Ts)$ of $\Phi$. Since the stratification $\Ss$ is compatible 
with the covering $\Us$ of $M$, each stratum of $\Ss$
lies in some product coordinate chart of $M^s$
(i.e., product of charts $U\in \Us$)

Given $l \in\IN$, $l\leq r$, let $\pi_l: \IR\llb y\rrb^q \to \IR[y]^q$ denote the projection
$\pi_l\left(\sum_{(\be,j)}W_{\be,j} y^{\be,j}\right)=\sum_{(\be,j),|\be|\leq l}W_{\be,j} y^{\be,j}$.
If $l\leq r\leq r'$ and $b\in X$, then $\pi_l(\cR_{r'}(b)) \subset \pi_l(\cR_{r}(b)) \subset \cR_l(b)$;
therefore, there exists $r\geq l$ such that $\pi_l(\cR_{r'}(b)) = \pi_l(\cR_r(b))$, for all $r'\geq r$.
Let $r(b,l)$ denote the smallest such $r$.

Given $\ua = (a_1,\ldots,a_s) \in M^s_\vp$\,, let $\rho^0(\ua)$ denote the rank of the matrix
of the system of equations
\begin{equation}\label{eqn:finitesystem}
\sum_{\substack{|\beta|\le r\\j=1,\ldots,q}}L_{\alpha,i}^{\beta,j}(a_\nu)\,W_{\beta,j}=0,\quad
\text{where }\, |\al|\leq r,\, i=1,\ldots,p,\, \nu=1,\ldots,s;
\end{equation}
i.e, $\rho^0(\ua)$ is the rank of the matrix $\left(L_{\alpha,i}^{\beta,j}(a_\nu)\right)$
with rows indexed by $\al, i, \nu$ and columns indexed by $\be, j$. Likewise, given
$l\leq r$, let $\rho^1(\ua)$ denote the rank of the matrix
of the system of equations
$$\sum_{\substack{l<|\beta|\le r\\j=1,\ldots,q}}L_{\alpha,i}^{\beta,j}(a_\nu)\,W_{\beta,j}=0,\quad
\text{where }\, |\al|\leq r,\, i=1,\ldots,p,\, \nu=1,\ldots,s.$$
Of course, $\rho^0(\ua)$ depends on $r$, and $\rho^1(\ua)$ depends on $r,l$.
Again, however, neither depends on the choice of coordinate chart $U$ containing each
$a_\nu$.

Let $l \in\IN$, $l\leq r$, and consider a stratum $T\in\Ts$. Set
\begin{align*}
\s^0_T &:= \max\left\{\rho^0(\ua): \ua\in\Phi^{-1}(T)\right\},\\
\s^1_T &:= \max\left\{\rho^1(\ua): \ua\in\Phi^{-1}(T)\right\}.
\end{align*}
Note that, if $b\in T$ is a point such that
\begin{align*}
\max\left\{\rho^0(\ua): \ua\in\Phi^{-1}(b)\right\} &= \s^0_T,\\
\max\left\{\rho^1(\ua): \ua\in\Phi^{-1}(b)\right\} &= \s^1_T,
\end{align*}
then 
\begin{align*}
\dim \pi_r(\cR_r(b)) &= s - \s^0_T,\\
\dim \pi_l(\cR_r(b)) &= s - \s^0_T - \left(s - {n+l \choose l} q - \s^1_T\right) = {n+l \choose l} q + \s^1_T - \s^0_T.
\end{align*}

There are strata $S^0_T,\,S^1_T \in \Ss^{(s)}$ over $T$ such that 
\begin{align*}
\s^0_T &= \max\left\{\rho^0(\ua): \ua\in S^0_T\right\},\\
\s^1_T &= \max\left\{\rho^1(\ua): \ua\in S^1_T\right\}.
\end{align*}
Then there is a closed nowhere dense definable subset $Z^{rl}_T$ of $T$, such that,
for each $b\in T\backslash Z^{rl}_T$,
\begin{align*}
\s^0_T &= \max\left\{\rho^0(\ua): \ua\in S^0_T\cap\Phi^{-1}(b)\right\},\\
\s^1_T &= \max\left\{\rho^1(\ua): \ua\in S^1_T\cap\Phi^{-1}(b)\right\}.
\end{align*}

Set $\om^{rl}_T = {n+l \choose l} q + \s^1_T - \s^0_T$. Then $\om^{rl}_T = \dim \pi_l(\cR_r(b))$,
for every $b\in T\backslash Z^{rl}_T$. If $l\leq r\leq r'$, then $\om^{r'l}_T \leq \om^{rl}_T$; therefore,
for every $l$, there exists $r(l)= r_T(l)\geq l$ such that $\om^{rl}_T = \om^{r(l)l}_T$, when $r\geq r(l)$. Since
$\Ts$ is finite, we can choose $r(l)$ independent of $T$. This gives the following lemma.

\begin{lemma}\label{lem:rel}
For every $l\in\IN$, there exists $r(l)\in\IN$, $r(l)\geq l$, such that,
if $r>r(l)$ and $T\in \Ts$, then $\pi_l(\cR_r(b)) = \pi_l(\cR_{r-1}(b))$, for every
$b\in T\backslash\left(Z^{rl}_T \cup Z^{(r-1)l}_T\right)$.
\end{lemma}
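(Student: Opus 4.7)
The plan is to read off the lemma almost immediately from the analysis that precedes it; the main content is a stabilization argument for a non-increasing sequence of non-negative integers, together with the finiteness of the stratification~$\Ts$.

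First I would observe that the two stated facts before the lemma already give everything one needs pointwise on a single stratum~$T$. Namely, the integer
\[
\om^{rl}_T = \binom{n+l}{l}q + \s^1_T - \s^0_T
\]
equals $\dim \pi_l(\cR_r(b))$ for every $b \in T \setminus Z^{rl}_T$, and the inclusion $\pi_l(\cR_{r'}(b)) \subset \pi_l(\cR_r(b))$ for $l\le r\le r'$ yields $\om^{r'l}_T\le \om^{rl}_T$. Since $\om^{rl}_T$ is a non-negative integer that is non-increasing in~$r$ (for fixed $l$ and fixed stratum $T$), it must stabilize; i.e., there exists $r_T(l)\ge l$ such that $\om^{rl}_T=\om^{r_T(l)l}_T$ for all $r\ge r_T(l)$.

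Next I would use that $\Ts$ is finite to set $r(l):=\max_{T\in\Ts} r_T(l)$, which makes the stabilization value $\om^{rl}_T$ independent of $r$, uniformly in $T$, once $r\ge r(l)$. Then for any $r>r(l)$ and any $b\in T\setminus(Z^{rl}_T\cup Z^{(r-1)l}_T)$, both strata's generic dimensions are attained, giving
\[
\dim \pi_l(\cR_r(b)) = \om^{rl}_T = \om^{(r-1)l}_T = \dim \pi_l(\cR_{r-1}(b)).
\]
Combined with the natural inclusion $\pi_l(\cR_r(b)) \subset \pi_l(\cR_{r-1}(b))$ (valid because any $W\in\cR_r(b)$ is, in particular, a formal relation mod $(x)^r$, so its $l$-jet lies in $\pi_l(\cR_{r-1}(b))$), equality of dimensions of finite-dimensional real vector spaces forces equality of the spaces themselves, which is the conclusion of the lemma.

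I do not expect a serious obstacle here; the one place that deserves a sanity check is the monotonicity statement $\pi_l(\cR_{r'}(b)) \subset \pi_l(\cR_r(b))$ for $l\le r\le r'$, which was asserted earlier and which I would justify by truncating the defining congruence $T^{r'}_a A(x)\,W(\tT^{r'}_a \vp(x)) \equiv 0 \pmod{(x)^{r'+1}}$ modulo $(x)^{r+1}$, noting that $\tT^{r'}_a\vp - \tT^r_a\vp \in (x)^{r+1}$ and that this enters $W$ without a constant term, so all discrepancies lie in $(x)^{r+1}$. Once this is in hand, the rest of the argument is purely the stabilization of a decreasing $\IN$-valued sequence on finitely many strata.
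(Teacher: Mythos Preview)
Your proposal is correct and follows essentially the same approach as the paper: the paper itself derives the lemma directly from the stabilization of the non-increasing integers $\om^{rl}_T$ and the finiteness of $\Ts$, just as you do. Your added justification of the inclusion $\pi_l(\cR_{r'}(b)) \subset \pi_l(\cR_r(b))$ and the final dimension-plus-inclusion step makes explicit what the paper leaves implicit.
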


\begin{rmk}\label{rmk:chev}
This remark will be used in Section \ref{sec:linear}.
Let $b\in X$. We define the  \emph{module of formal relations $\cR(b)=\cR_\infty(b)$ at $b$} as
$$
\R(b):=\left\{W\in\IR\llbracket y\rrbracket^q:  \text{for all } a\in\varphi^{-1}(b),\ \ 
T_aA(x)\,W(\tT_a\varphi(x)) = 0\right\}.
$$ 
Let $l\in\IN$. Then $\pi_l(\R(b)) = \bigcap_{r\geq l} \pi_l(\cR_r(b))$. 
Since $\pi_l(\cR_{r'}(b))\subset\pi_l(\cR_{r}(b))$ when
$l\leq r'\leq r$, it follows that
there exists $r\geq l$ such that $\pi_l(\cR_r(b)) = \pi_l(\cR(b))$ (Chevalley's lemma,
cf. \cite[{\S}II,\,Lemma 7]{Che43}),
and thence that $r(b,l)$ is the least such $r$ .
We call $l\mapsto r(b,l)$ the \emph{Chevalley function at $b$}. 

It is easy to see
that, given $l$,  $r_T(l) \geq r(b,l)$, for all $b\in T\backslash \bigcup_{r\geq r_T(l)}Z^{rl}_T$.
Moreover, if $r(b,l)$ is bounded on $X$, then we can take $r(l) = \max_{b\in X} r(b,l)$.
\end{rmk} 

Now let us fix $r> r(l)$. Let $T\in \Ts$. The diagram of initial exponents $\cN(\cR_r(b))$,
$b\in T\backslash Z^{rl}_T$ takes only finitely many values; choose $b\in T\backslash Z^{rl}_T$
where $\cN(\cR_r(b))$ takes its minimum value. Set $\De(b) = \left(\IN^n \times \{1,\ldots,q\}\right)
\backslash\, \cN(\cR_r(b))$. 

There exists $\ua \in S^0_T \cap \Phi^{-1}(b)$ such that
$\rho^0(\ua) = \s^0_T$. By Corollary \ref{cor:compl}, for any such $\ua$,
\begin{equation*}
\#\De(b) = \rho^0(\ua) = \s^0_T = \rank\left(L_{\alpha,i}^{\beta,j}(a_\nu)\right).
\end{equation*}
Therefore, there exists a nonzero minor
\begin{equation}\label{eqn:minor}
M_T(\ua) = \det\left(L_{\alpha_k,i_k}^{\beta,j}(a_{\nu_k})\right) \quad \left(\text{where } 
(\be, j) \in \De(b),\, k=1,\ldots,\s^0_T\right).
\end{equation}

We can compute $M_T$ for any product coordinate chart containing 
$S^0_T \subset M^s_\phi \subset M^s$. Set
$\Theta^{rl}_T = \{\uc \in S^0_T: M_T(\uc) = 0\}$ and $\Sigma^{rl}_T 
= T\backslash\Phi\left(S^0_T\backslash\Theta^{rl}_T\right)$. Clearly, $\Theta^{rl}_T$ and
$\Sigma^{rl}_T$ are nowhere dense definable subsets of $S^0_T$ and $T$, respectively.
By Cramer's rule, since $M_T(\ua)\neq 0$, the system \eqref{eqn:system} is equivalent
to the system
\begin{equation*}
W_{\ga,k} - \sum_{(\be,j) \notin \De(b)} W_{\be,j} \cdot N^{\be,j}_{T,\ga,k}(\ua)/M_T(\ua) = 0,\quad
(\ga,k)\in\De(b),
\end{equation*} 
where the $N^{\be,j}_{T,\ga,k}(\ua)$ are appropriate minors of the matrix of the
system \eqref{eqn:finitesystem}.

\begin{lemma}\label{lem:diag}
The diagram $\cN(\cR(b'))$ and therefore its complement $\De(b')$ are independent of 
$b' \in T\backslash\left(Z^{rl}_T \cup \Sigma^{rl}_T\right)$.
\end{lemma}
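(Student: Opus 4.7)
The plan is to pick an arbitrary $b'\in T\setminus(Z^{rl}_T\cup\Sigma^{rl}_T)$, choose $\ua'\in S^0_T\cap\Phi^{-1}(b')$ with $M_T(\ua')\neq 0$, and prove that $\cN(\cR_r(b'))=\cN(\cR_r(b))$. Since $b'\notin Z^{rl}_T$, the full linear system \eqref{eqn:system} defining $\cR_r(b')$ has rank $\s^0_T$, and since $M_T(\ua')\neq 0$ this rank is already achieved by the sub-system \eqref{eqn:finitesystem} at $\ua'$. Hence the full and sub-systems share the same solution space, which by Cramer's rule is exactly the triangular system displayed just before the statement of the lemma. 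Thus $\pi_r(\cR_r(b'))$ is parameterized by the same free variables $W_{\be,j}$, $(\be,j)\notin\De(b)$, $|\be|\leq r$, as $\pi_r(\cR_r(b))$ was.

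By the choice of $b$ as a minimum-diagram point in $T\setminus Z^{rl}_T$, we have $\cN(\cR_r(b'))\geq\cN(\cR_r(b))$ in the lex-vertex order. The core of the proof will establish the reverse set-theoretic containment $\cN(\cR_r(b'))\supseteq\cN(\cR_r(b))$ by induction on the vertices $(\al_1,j_1)<\cdots<(\al_t,j_t)$ of $\cN(\cR_r(b))$. Suppose inductively that $(\al_1,j_1),\ldots,(\al_{i-1},j_{i-1})$ are also the first $i-1$ vertices of $\cN(\cR_r(b'))$; then the $i$-th vertex of $\cN(\cR_r(b'))$ must be $\geq(\al_i,j_i)$, so every element of $\cN(\cR_r(b'))$ strictly less than $(\al_i,j_i)$ lies in the $\IN^n$-staircase generated by the first $i-1$ vertices, and hence in $\cN(\cR_r(b))$. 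I construct $W^{(i)}\in\pi_r(\cR_r(b'))\subseteq\cR_r(b')$ by setting $W_{\al_i,j_i}=1$ and all other free variables to zero, so that the Cramer system determines $W_{\ga,k}$ for $(\ga,k)\in\De(b)$.

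If $\exp W^{(i)}=(\ga_0,k_0)<(\al_i,j_i)$ were strictly less, then $(\ga_0,k_0)$ would belong to $\De(b)$ (being one of the Cramer-determined coordinates) and to $\cN(\cR_r(b'))$, forcing $(\ga_0,k_0)\in\cN(\cR_r(b))$ by the previous observation; this contradicts $\cN(\cR_r(b))\cap\De(b)=\emptyset$. Hence $\exp W^{(i)}=(\al_i,j_i)\in\cN(\cR_r(b'))$, and since $(\al_i,j_i)$ is already a vertex of $\cN(\cR_r(b))$ it cannot lie in the staircase of the earlier vertices, so it must be the $i$-th vertex of $\cN(\cR_r(b'))$. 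This completes the induction and yields $\cN(\cR_r(b'))\supseteq\cN(\cR_r(b))$; since both diagrams contain $\{(\be,j):|\be|>r\}$ and their complements in $\{(\be,j):|\be|\leq r\}$ both have cardinality $\s^0_T$, the containment is an equality and $\De(b')=\De(b)$. The main obstacle will be the bookkeeping in the inductive step, specifically consolidating the two ways that an element $(\ga_0,k_0)$ could enter $\cN(\cR_r(b'))$ (either as a genuinely new vertex or by being dominated by an earlier vertex) into the single consequence $(\ga_0,k_0)\in\cN(\cR_r(b))$, which then collides with the disjointness $\De(b)\cap\cN(\cR_r(b))=\emptyset$.
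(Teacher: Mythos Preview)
Your proof is correct and follows the same strategy as the paper's: your elements $W^{(i)}$ are precisely the elements of $\cR_r(b')$ displayed in the paper's equation \eqref{eqn:diagprime}, and in both arguments the goal is to show that their initial exponents are the vertices $(\al_i,j_i)$ of $\cN(\cR_r(b))$, whence $\cN(\cR_r(b))\subset\cN(\cR_r(b'))$ and equality follows from minimality. Your induction on the vertices makes explicit the justification of this step, which the paper compresses into the clause ``it follows that $(\be_h,j_h)=\exp G_h\in\cN(\cR_r(b'))$''.
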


\begin{proof}
Let $(\be^1,j^1)<\cdots<(\be^{t'},j^{t'})$ denote the vertices of $\cN(\cR(b))$, and 
set $t := \max\{h: |\be^h|\le r\}$. Since $(y)^{r+1}\IR\llbracket y\rrbracket^q\subset\R_r(b)$
and $|\be^h|>r$ for $h>t$, the standard basis of $\R_r(b)$ is given by $y^{(\be^h,j^h)},\,h=t+1,\ldots,t'$,
together with
\begin{equation}\label{eq:minor}
G_h(y) := y^{\be^h,j^h}+\sum_{(\gamma,k)\in\Delta(b)}\left(N^{\be^i,j^h}_{T,\gamma,k}(a)/M_T(a)\right)y^{\gamma,k},
\quad h=1,\ldots,t.
\end{equation}

Moreover, for any $\a'\in S^0_T\backslash \Theta^{rl}_T$ such that $\Phi(\a')\notin Z^{rl}_T$ and $i=1,\ldots,r$, we have 
\begin{equation}\label{eqn:diagprime}
y^{\be^h,j^h}+\sum_{(\gamma,k)\in\Delta(b)}\left(N^{\be^i,j^h}_{T,\gamma,k}(a')/M_T(a')\right)y^{\gamma,k}
\in\cR_r(\Phi(\a')).
\end{equation}

Suppose that $b'\in T\backslash\left(Z^{rl}_T \cup \Sigma^{rl}_T\right)$. Since 
$\cN(\cR_r(b))\leq\cN(\cR_r(b'))$, it follows that $(\be_h,j_h) = \exp G_h \in \cN(\cR_r(b'))$, 
for each $h=1,\ldots,t$,
so that $\cN(\cR_r(b))\subset\cN(\cR_r(b'))$, and hence they are equal.
\end{proof}

\subsection{Inductive step of the proof}\label{subsec:ind}
In this subsection, we complete the proof of Theorem \ref{thm:formal}.  By induction
on dimension, it is enough to prove the following.

\begin{prop}\label{prop:ind}
Let $B\subset X$ be a closed definable subset of dimension $d$. Then there exist a closed
definable subset $B'\subset B$, of dimension $<d$, and a function $t:\IN\to\IN$ (where $t(k)\geq k$)
with the following property: if $f:M\to\IR^p$ is $\cC^t$, definable, and $t$-flat on $\varphi^{-1}(B')$
($t=t(k)$), and the system of equations \eqref{eq:varphi2} admits a formal solution to order $t$ at every
$b\in B\backslash B'$, then there exists a $\cC^k$ definable function $g: \IR^n \to \IR^q$ such
that $f - A\cdot(g\circ\vp)$ is $k$-flat on $\varphi^{-1}(B)$.
\end{prop}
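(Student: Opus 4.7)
The plan is to adapt the stratum-by-stratum argument of the composite function theorem in \cite{BMP96}, using the module of relations machinery of Subsection \ref{subsec:main1} to produce, on the generic part of each top-dimensional stratum of $B$, a definable Whitney field whose Taylor polynomials are formal solutions of \eqref{eq:varphi2}, and then to invoke the definable Whitney extension theorem (Theorem \ref{thm:ominWhitney}). First I would apply Theorem \ref{thm:strat} to $\vp$ compatibly with $B$ and its decomposition by dimension, enumerating the strata $T_1,\ldots,T_N$ of the resulting $\Ts$ that are contained in $B$ and of dimension exactly $d$. Fixing parameters $l\ge k$ and $r>r(l)$ (from Lemma \ref{lem:rel}), I would form the nowhere dense definable subsets $Z^{rl}_{T_i}$, $Z^{(r-1)l}_{T_i}$, $\Sigma^{rl}_{T_i}$ of Subsection \ref{subsec:main1} for each $i$, and set
$$
B' \,:=\, \bigcup_{i=1}^{N} \overline{Z^{rl}_{T_i} \cup Z^{(r-1)l}_{T_i} \cup \Sigma^{rl}_{T_i}} \,\cup\, \bigcup_{\substack{T\in\Ts,\, T\subset B \\ \dim T<d}} \overline{T},
$$
a closed definable subset of $B$ of dimension $<d$.

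On $T_i^\circ := T_i\setminus B'$, Lemma \ref{lem:diag} guarantees that $\cN(\cR_r(b))$ is constant in $b$, with common complement $\De$, and the Cramer-rule identities \eqref{eq:minor} yield a standard basis of $\cR_r(b)$ with coefficients that are definable rational functions having denominator the nonvanishing minor $M_{T_i}(\ua)$. For $b\in T_i^\circ$, the hypothesized formal solution $P_b\in\IR[y]^q$ of order $r$ can be normalized by subtracting an element of $\cR_r(b)$ of degree $\le r$ (via Theorem \ref{thm:formaldiv} and Corollary \ref{cor:compl}) so that $\supp P_b\subset\De$; the normalized $P_b$ is then uniquely determined, and its coefficients are explicit Cramer expressions in the Taylor data of $A$, $\vp$ and $f$ at preimage points $\ua\in S^0_{T_i}\cap\Phi^{-1}(b)$, hence depend definably on $b$. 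Truncation at order $k$ yields a candidate definable polynomial field
$$
G_i(b,y) \,:=\, \sum_{\substack{(\be,j)\in\De \\ |\be|\le k}} \frac{g_{\be,j}(b)}{\be!}\,y^{(\be,j)}, \quad \text{for } b\in T_i^\circ.
$$

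The next steps would be to verify that $G_i$ is a $\cC^k$-Whitney field on $T_i^\circ$, to glue the $G_i$ with the zero field on $B'$, and to apply the definable Whitney extension theorem. The Whitney property on $T_i^\circ$ should follow from Borel's lemma with parameter (Lemma \ref{lem:borel}) applied to the Taylor data of $f$ (taking $f\in\cC^t$ for $t=t(k)$ sufficiently large), combined with the crucial observation of Remark \ref{rem:diagdiff} that $\cN(\cR_r(b))+\IN^n=\cN(\cR_r(b))$ makes formal $y$-differentiation preserve $\De$ and commute with the Cramer formulas. To extend across $B'$ I would use the $\rho$-regularity of $B$ (Lemma \ref{lem:rreg}), the $t$-flatness of $f$ on $\vp^{-1}(B')$ (which forces $g_{\be,j}(b)\to 0$ as $b\to B'$ via the Cramer formulas), and the l'H\^opital--Hestenes lemma (Lemma \ref{lem:hestenes}), which lifts the separate Whitney conditions of order $k\rho$ on each piece to a single $\cC^k$-Whitney field on $B$. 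Theorem \ref{thm:ominWhitney} then produces the desired $\cC^k$ definable $g:\IR^n\to\IR^q$.

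The main obstacle is the quantitative bookkeeping involved in choosing $t(k)$. It must be taken large enough to simultaneously: (i) feed Lemma \ref{lem:rel} with $l\ge k\rho$, producing a workable $r=r(l)$; (ii) ensure that $\cC^t$ data for $f$, when reconstituted through the Cramer formulas, yields $\cC^{k\rho}$-Whitney-field compatibility on each $T_i^\circ$, so that Borel's lemma applies; and (iii) convert $t$-flatness of $f$ on $\vp^{-1}(B')$ into $k\rho$-flatness of $G_i(b,y)$ as $b\to B'$, against the possible vanishing of the denominators $M_{T_i}$ near $B'$. Controlling the rate of vanishing of $M_{T_i}$ against the rate of flatness of $f$ relies on \L ojasiewicz-type inequalities, which are available here via resolution of singularities in the quasianalytic setting (cf.\ Remark \ref{rem:model}). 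Consequently $t(k)$ will be a composite of $r(k\rho)$, $\rho$, and the \L ojasiewicz exponents governing the behaviour near $B'$.
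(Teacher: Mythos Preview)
Your proposal is correct and follows essentially the same route as the paper's proof: stratify, normalize the formal solutions to lie in the complement $\De$ of the diagram via Cramer's rule, prove the Whitney condition by Borel's lemma together with Lemma~\ref{lem:rel} and Remark~\ref{rem:diagdiff}, control the coefficients near $B'$ by a {\L}ojasiewicz inequality for the minor $M_T$ against the $t$-flatness of $f$, glue by Lemma~\ref{lem:hestenes}, and finish with Theorem~\ref{thm:ominWhitney}.

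One bookkeeping slip to fix: you define $G_i$ by truncation at order $k$, but later (correctly) invoke $l\ge k\rho$ and Lemma~\ref{lem:hestenes}. The truncation must be at order $l$, not $k$; otherwise $G_i$ cannot be a $\cC^{k\rho}$ Whitney field on $T_i^\circ$, and the l'H\^opital--Hestenes step does not apply. In the paper this is exactly the definition $G_\tau:=\pi_l(V_\tau)$, and the final $\cC^k$ field on $B$ is obtained only after Hestenes by discarding the terms of degree $>k$. Likewise, the Whitney-field argument needs the full chain $D_{b,v}V_\tau^{r-1}-D_{y,v}V_\tau\in\cR_{r-1}(b)$, then $\pi_{l-1}(\cR_{r-1}(b))=\pi_{l-1}(\cR_r(b))$ by Lemma~\ref{lem:rel}, then support in $\De_\tau$ forces vanishing; your sketch has the pieces but should make this order of operations explicit. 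Finally, the passage from flatness of $f$ on $\vp^{-1}(B')$ to flatness of the Cramer numerators as $b\to B'$ uses properness of $\vp$ (so that the points $\ua$ over $b$ actually limit into $\Phi^{-1}(B')$); this is the one place where properness enters, and it is worth saying so.
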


\begin{proof}
By Lemma \ref{lem:rreg}, there is a positive integer $\rho$ such that every connected
component of $B$ is $\rho$-regular.

Take any integer $l\geq k\rho$. Let $(\Ss,\Ts)$ denote a definable stratification of $\vp: M\to \IR^n$
such that $\Ss$ is compatible with $B$ and with the covering $\Us$ of $M$ by coordinate charts. 
Note that, if $\P$ is a 
partition of $M$ compatible with $\Us$, and $P$ is an element of $\P$ 
with a boundary point in $U\in \Us$, then $P \subset U$. We use the notation
and results of the previous subsection.

Take $r > r(l)$ (see Lemma \ref{lem:rel}).  Now, taking an appropriate semistratification of $\Phi$,
we can find a finite stratification 
$\{\Lambda_\tau\}$ of $X$, compatible with the sets $T$, $Z^{rl}_T$, $Z^{(r-1)l}_T$, $\Sigma^{rl}_T$,
and a family $\{\Ga_\tau\}$ of connected definable $\cC^\infty$ submanifolds of $M^s_\phi$,
such that, for each $\tau$, $\Phi|_{\Ga_\tau}: \Ga_\tau \to \La_\tau$ is a $\cC^\infty$ definable
submersion, and $\Ga_\tau \subset S^0_T\backslash \Theta^{rl}_T$ whenever $\La_\tau \subset
T\backslash \Sigma^{rl}_T$. 

Note that, for each $\tau$, $\Phi(\overline{\Ga_\tau}\backslash \Ga_\tau)
= \overline{\La_\tau}\backslash \La_\tau$ because $\vp$ is proper and $\Phi|_{\Ga_\tau}: \Ga_\tau \to \La_\tau$ is
a submersion. (This is where the hypothesis that $\vp$ be proper intervenes.)

Suppose that $T\in\Ts$, $\La_\tau\subset T$ and $\dim \La_\tau = \dim T$. Then $\La_\tau$ is
open in $T$ and $\La_\tau \subset T\backslash \left(Z^{rl}_T \cup \Sigma^{rl}_T\right)$.
In this case, we set $\De_\tau = \De_T$ and $M_\tau = M_T$ (using any coordinate chart containing
$S^0_T$).

Let $J := \{\tau: \dim \La_\tau = d\}$ and $B' := \{\La_\tau: \dim \La_\tau < d\}$. For each $\tau\in J$,
$\La_\tau$ lies in some $T$ of dimension $d$, and $M_\tau$ does not vanish on $\Ga_\tau$.
By {\L}ojasiewicz's inequality \cite[Thm.\,6.3]{BM04} there exists a positive integer
$\sigma$ such that 
\begin{equation}\label{eqn:loja}
|M_\tau(\a)|\geq C \dist\left(\a,\,\overline{\Gamma_\tau}\setminus\Gamma_\tau\right)^\sigma,\quad
\ua\in\Gamma_\tau,
\end{equation}
(where the constant $C$ is local) for all $\tau\in J$, in any coordinate chart $U$ which
intersects $\overline{\Gamma_\tau}\setminus\Gamma_\tau$ (and therefore contains $S^0_T$), 
where $\La_\tau\subset T$.

We will show that, to get a function $t=t(k)$ as required in Proposition \ref{prop:ind},
we can take any integer $t \geq r + \s$; i.e., we can take $t(k) \geq r + \s$, where 
\begin{enumerate}
\item[(1)] $\s$ is the {\L}ojasiewicz exponent of \eqref{eqn:loja}, 
\item[(2)] $r > r(l)$ (as given by Lemma \ref{lem:rel}),
\item[(3)] $l \geq k\rho$, where $\rho$ is the exponent of regularity of $B$. 
\end{enumerate}
See also the sketch of the proof of Theorem \ref{thm:linear}.

Take $t\geq r+\sigma$ and suppose that $f$ satisfies the hypotheses of Proposition 
\ref{prop:ind}. In particular, for all $b\in B$, 
$$
T^t_af(x) \equiv T^t_aA(x)\, W_b(\tT^t_a\vp(x))\quad \text{mod } (x)^{t+1}\IR\llb x\rrb^p,\quad \text{fo all } a\in \vp^{-1}(b),
$$
where $W_b \in \IR[y]^q$. 

Let $\tau\in J$. For each $b\in\Lambda_\tau$, there is a unique polynomial 
$$V_\tau(b,y)=\sum_{(\beta,j)\in\Delta_\tau}V^{\beta,j}_\tau(b)y^{\beta,j}\in\IR[y]^q,
$$
of degree $\leq r$, such that 
$$
W_b(y)-V_\tau(b,y)\in\R_r(b) \quad \text{and} \quad \supp V_\tau(b,y) \subset \De_\tau.
$$

\begin{lemma}\label{lem:loj}
Each coefficient $V^{\beta,j}_\tau(b)$
tends to zero as $b$ tends to a point
$b_0\in\overline{\Lambda_\tau}\backslash\Lambda_\tau$.
\end{lemma}

\begin{proof}
For all $a\in\varphi^{-1}(b)$, 
$$
T^{r}_af(x)\equiv T^{r}_aA(x)\,V_\tau\left(b,\tilde T^{r}_a\varphi(x)\right)\quad \mod 
(x)^{r+1}\IR\llbracket x\rrbracket^p.
$$
Write $f = (f_1,\ldots,f_p)$. Let $a_0 \in \vp^{-1}(b_0) \cap \left(\overline{\Ga}_\tau\backslash \Ga_\tau\right)$.
In any coordinate chart $U$ containing $a_0$,
\begin{equation}\label{eqn:cramer2}
\frac{\p^{|\alpha|} f_i}{\p x^\al} (a_\nu)=\sum_{(\beta,j)\in\Delta_\tau}L_{\alpha,i}^{\beta,j}(a_\nu)\,
V^{\beta,j}_\tau(\Phi\left(\a\right)),
\end{equation}
for all $a\in\Gamma_\tau,\,|\alpha|\leq r,\,i=1,\ldots,p$, and $\nu=1,\ldots,s$.

By Cramer's rule, 
\begin{equation}\label{eq:cramer3}
V^{\beta,j}_\tau\left(\Phi\left(\a\right)\right)=H^{\beta,j}_\tau\left(\a\right)/M_\tau\left(\a\right),
\end{equation}
where $H^{\beta,j}_\tau$ is $\cC^\sigma$ and $\sigma$-flat on $\overline{\Gamma_\tau}\setminus\Gamma_\tau\subset\Phi^{-1}(\overline{\Lambda_\tau}\setminus\Lambda_\tau)\subset\Phi^{-1}(B')$. The assertion
follows from \eqref{eqn:loja}.
\end{proof}

\begin{lemma}\label{lem:GtauWF}
For each $\tau\in J$, set
$$
G_\tau(b,y) := \pi_l\left(V_\tau(b,y)\right) = \sum_{\substack{(\beta,j)\in\Delta_\tau\\|\beta|\le l}}V^{\beta,j}_\tau(b)\,y^{\beta,j}.
$$
Then $G_\tau$ is a definable $\cC^l$ Whitney field on $\Lambda_\tau$.
\end{lemma}

\begin{proof}
We have to show that
$$
D_{b,v} G_\tau^{l-1}(b,y) = D_{y,v} G_\tau(b,y),\quad \text{for all }\ b\in\La_\tau,\, v\in T_b\La_\tau
$$
(see Lemma \ref{lem:borel}).

Let $\Psi_\nu: M^s \to \IR^n$ and $h_\nu: M^s \to \IR^p$, $\nu=1,\ldots,s$, denote the mappings
$\Psi_\nu(\ua) = \Psi_\nu(a_1,\ldots,a_s) := \vp(a_\nu)$ and $h_\nu(\ua) := f(a_\nu)$. 
Likewise let $A_\nu(\ua) := A(a_\nu)$. Then
$\Psi_\nu|_{M^s_\vp} = \Phi$ and, for each $\ua\in M^s$, the polynomial $T^r_{\ua}\Psi_\nu(\ux)$
can be identified with $T^r_{a_\nu}\vp(x)$.

Let $b\in\La_\tau$. Choose $\ua\in\Phi^{-1}(b)$ such that, for any $W\in\IR[y]^q$, $W\in\cR_{r-1}(b)$
if and only if $W\left(\tT^r_{a_\nu}\vp(x)\right) \equiv 0 \mod (x)^r\IR\llb x\rrb^q$, $\nu=1,\ldots,s$. Take $S\in\Ss^s$
such that $\ua\in S$. Then $\La_\tau$ is open in $T = \Phi(S)$. We have
\begin{equation}\label{eq:h}
T^r_{\uc}h_\nu(\ux) = T^r_{\uc}A_\nu(\ux)\cdot V_\tau\left(\Phi(\uc), \tT^r\Psi_\nu(\uc,\ux)\right) \mod (\ux)^{r+1}\IR\llb\ux\rrb^p.
\end{equation}
for all $\uc\in S\cap \Phi^{-1}(\La_\tau)$ and $\nu=1,\ldots,s$. 

By \eqref{eq:cramer3}, the $V^{\be,j}_\tau$ are
$\cC^1$ (in fact, $\cC^\s$) on $\La_\tau$.
Let $v$ be any tangent vector to $\La_\tau$ at $b$.
Since $\Phi|_S: S\to T$ is a submersion, there exists $u\in T_{\ua} S$ such that 
$d_{\ua}\left(\Phi|_S\right)(u)=d_{\ua}\Psi_\nu(u) =v$. 
By Lemma \ref{lem:borel} and \eqref{eq:h}, 
$$
T^r_{\ua}A_\nu(\ux)\cdot\left(D_{b,v} V^{r-1}_\tau\left(b,\tT^r_{\ua}\Psi_\nu(\ux)\right)-D_{y,v}V_\tau\left(b,\tT^r_{\ua}\Psi_\nu(\ux)\right)\right)\equiv0\mod (\ux)^r\IR\llbracket\ux\rrbracket^q
$$
where $V^{r-1} := \sum_{|\be|\leq r-1}V^{\beta,j}_\tau(b)y^{\beta,j}$; hence
$$
T^r_{a_\nu}A(x)\cdot\left(D_{b,v} V^{r-1}_\tau\left(b,\tT^r_{a_\nu}\varphi(x)\right)-D_{y,v}V_\tau\left(b,\tT^r_{a_\nu}\varphi(x)\right)\right)\equiv0\mod (x)^r\IR\llbracket x\rrbracket^q.
$$
It follows that 
$$
D_{b,v} V^{r-1}_\tau\left(b,y\right)-D_{y,v}V_\tau\left(b,y\right) \in\R_{r-1}(b),
$$
and, by Lemma \ref{lem:rel}, that
$$
D_{b,v}G^{l-1}_\tau(b,y)-D_{y,v}G_\tau(b,y)\in \pi_{l-1}(\cR_r(b))=\pi_{l-1}(\cR_{r-1}(b)).
$$
On the other hand, $$
\supp \left(D_{b,v}G^{l-1}_\tau(b,y)-D_{y,v}G_\tau(b,y)\right) \subset \Delta_\tau;
$$ 
therefore, $D_{b,v}G^{l-1}_\tau(b,y)=D_{y, v}G_\tau(b,y)$, as required.
\end{proof}

We can now complete the proof of Proposition \ref{prop:ind}.
Define $G\in\cC^0(B)[y]$ by $G:=G_\tau$ on $\Lambda_\tau\subset B\backslash B'$ and $G:=0$ otherwise. 
By  Lemma \ref{lem:hestenes}, since $B$ is $\rho$-regular, $G$ induces a $\cC^k$-Whitney field on $B$
(truncating terms of degree $> k$). We obtain a $\cC^k$ definable function $g:\IR^n\to \IR^q$, as required, using
Theorem \ref{thm:ominWhitney}.
\end{proof}

\section{Linear loss of differentiability}\label{sec:linear}
In this section, we show that Theorem \ref{thm:varphi} in the special case that $\vp$ is the
identity mapping of $\IR^n$ (i.e., Theorem \ref{thm:main} in the case that $A(x)$ 
is $\cC^\infty$ and definable in a polynomially bounded $o$-minimal structure) holds with
linear loss of differentiability.

\begin{thm}\label{thm:linear}
Let $A(x)$ denote a $p \times q$ matrix whose entries are functions on $\IR^n$ that are
$\cC^\infty$ and semialgebraic (or definable in a polynomially bounded $o$-minimal structure).
Then there is a function $r(m) = \la m + \mu$, for all $m\in\IN$, where $\la, \mu\in\IN$,
such that, for all $m\in \IN$, if $F:\IR^n\to\IR^p$ is semialgebraic (or definable) and the
system of equations
\begin{equation}\label{eq:linear}
A(x)G(x) = F(x)
\end{equation}
admits a $\cC^{r(m)}$ solution $G(x)$, then there is a semialgebraic
(or definable) $\cC^m$ solution.
\end{thm}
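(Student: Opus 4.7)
The plan is to apply Theorem \ref{thm:formal} with $M=\IR^n$ and $\vp=\id$, and to track how the function $r=r(m)$ produced in its proof depends on $m$. When $\vp=\id$, the module of formal relations simplifies: after the change of variable $z=y-b$,
$$
\cR(b)=\Ker\bigl(\widehat{A}_b:\IR\llb z\rrb^q\to\IR\llb z\rrb^p\bigr),
$$
where $\widehat{A}_b$ is the formal matrix obtained by Taylor-expanding $A$ at $b$. Inspecting Proposition \ref{prop:ind}, the function $t=t(k)$ may be taken with $t(k)\geq r+\s$, where $r>r(l)$ from Lemma \ref{lem:rel}, $l\geq k\rho$, $\rho$ is the regularity exponent of $B$, and $\s$ is the {\L}ojasiewicz exponent of \eqref{eqn:loja}. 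Since $\rho$ and $\s$ depend only on the geometry of $B$ and of the stratification (and not on $k$), linearity of $t(k)$ in $k$ reduces to showing that $r(l)$ is linear in $l$.

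The Chevalley estimate of Corollary \ref{cor:AR}(2), applied pointwise to $\widehat{A}_b$, yields
$$
r(b,l)\leq l+\lambda(b),
$$
where $\lambda(b)$ is the maximum of $|\al_i|$ over the vertices $(\al_i,j_i)$ of the diagram of initial exponents of $\Im\widehat{A}_b$. Indeed, $\cR_r(b)=\widehat{A}_b^{-1}(\fm^{r+1}\IR\llb z\rrb^p)$, and Chevalley gives $\cR_r(b)\subset\cR(b)+\fm^{r+1-\lambda(b)}\IR\llb z\rrb^q$, so $\pi_l(\cR_r(b))=\pi_l(\cR(b))$ whenever $r\geq l+\lambda(b)$. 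Thus the pointwise Chevalley function is linear in $l$ with unit slope, and the uniformity required by Lemma \ref{lem:rel} amounts to a uniform bound on $\lambda(b)$ on the generic part of each stratum.

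This uniform bound is the content of the diagram-stability results of \cite{BM17}: the diagrams $\cN(\Im\widehat{A}_b)$ take only finitely many values along a definable family, so after a definable stratification of $\IR^n$, $\lambda(b)$ is bounded on each stratum by a constant $\La_T$; then $\La=\max_T\La_T$ is a uniform bound, giving $r(l)\leq l+\La+O(1)$. Consequently $t(k)\leq\rho k+\La+\s+O(1)$, which is affine-linear in $k$ with slope $\rho$ and constant determined by the finitely many geometric constants of the stratification. Theorem \ref{thm:formal} is then proved by descending induction on $d=\dim B$, using at most $n+1$ applications of Proposition \ref{prop:ind}; at each step one composes the $t$-functions, and a composition of finitely many affine-linear maps is affine-linear, so the resulting $r(m)$ has the form $\la m+\mu$.

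The main obstacle is the uniform bound $\La$ on $\lambda(b)$: the diagrams $\cN(\Im\widehat{A}_b)$ can jump erratically under specialization, and proving that they take only finitely many values across a definable family of $\cC^\infty$ definable matrices requires the full machinery of \cite{BM17} (ultimately Noetherian-type results about definable families of ideals together with semicontinuity of the Hilbert--Samuel function under the ordering on diagrams introduced in \S\ref{subsec:diag}). Once this uniformity is in hand, everything else in the preceding proof of Theorem \ref{thm:formal} goes through unchanged and delivers linear loss of differentiability.
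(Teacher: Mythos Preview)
Your argument has a genuine gap at the step where you assert that $\s$ ``depends only on the geometry of $B$ and of the stratification (and not on $k$)''. In the proof of Proposition \ref{prop:ind}, the stratification $\{\La_\tau\}$ is taken compatible with the sets $Z^{rl}_T$, $Z^{(r-1)l}_T$, $\Sigma^{rl}_T$, all of which depend on $r$ (and hence on $l$ and $k$); the minors $M_\tau=M_T$ whose vanishing loci govern the {\L}ojasiewicz inequality \eqref{eqn:loja} are themselves $r$-dependent (they are minors of the system \eqref{eqn:finitesystem} of size $\s^0_T$, built from jets of order $r$). So the exponent $\s$ in \eqref{eqn:loja} does depend on $k$, and not in any \emph{a priori} controlled way. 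The paper flags exactly this obstruction in the paragraph preceding the proof: a uniform linear Chevalley bound alone does \emph{not} immediately yield linear $r(m)$, precisely because of this hidden $k$-dependence of $\s$.

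The paper's fix is different from what you propose. Rather than reusing the $k$-dependent stratification of \S\ref{subsec:ind}, it passes to a single stratification (independent of $k$) provided by \cite[Thm.\,5.1]{BM17}, on which the diagrams of initial exponents of both $\cA(a,x)=\Im\widehat A_a$ and $\cR(a,x)$ are constant. On each stratum one then has parametrized standard-basis generators whose coefficients are quotients of $\cC^\infty$ definable functions; the denominators are products of powers of the (finitely many) initial coefficients of vertex representatives, and the number of such factors appearing in the coefficient of a monomial of degree $\leq k$ is at most $k$. This yields $\s(k)=\zeta k$ with $\zeta$ a fixed bound on the {\L}ojasiewicz exponents of those finitely many initial coefficients. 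Combining this linear $\s(k)$ with the linear Chevalley bound (your Lemma \ref{lem:unifchev} argument is correct) and the fixed $\rho$ gives the affine $t(k)$, and the descending induction then proceeds as you describe. In short: your treatment of $r(l)$ is right, but you must also replace the stratification of \S\ref{subsec:ind} by the $k$-independent one from \cite{BM17} and argue linearity of $\s$ separately.
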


In Lemma \ref{lem:unifchev} below, we show that, under the hypotheses of Theorem \ref{thm:linear}, 
there is a uniform linear bound on the Chevalley function $r(a,l)$, $a\in\IR^n$ (recall Remark \ref{rmk:chev}); 
i.e., $r(a,l)\leq \xi l + \eta$, for all $a\in\IR^n$, where $\xi, \eta\in\IN$.

Our proof of Theorem \ref{thm:varphi}, however, does not immediately provide a linear function
$r(m)$ under the assumption that the Chevalley function has a uniform linear bound, because
the exponent of the {\L}ojasiewicz inequality \eqref{eqn:loja} involved in $r(m)$,
depends on a stratification associated to the order of differentiability $k$, and on
the minors $M_T$ associated to strata $T$, in a way that is not \emph{a priori}
given by a linear function of $k$.

In the proof of Theorem \ref{thm:linear}, we use a finite definable stratification of $\IR^n$ with the property that
the diagrams of initial exponents of both the module of (vector-valued) 
formal power series at a point $a$ generated
by (the formal Taylor expansions of) the columns of the matrix $A$, and the module of formal relations $\cR(a)$
among the columns (cf. Remark \ref{rmk:chev}), 
are constant on strata. (See \cite[Thm.\,5.1]{BM17}; it is easy to see that the proof of the latter
gives a finite definable stratification under the hypotheses of Theorem \ref{thm:linear}.) We then need
{\L}ojasiewicz inequalites for the initial coefficients of generators of such
(parametrized families of) modules on a stratum, which can be compared to the way that the minor $M_T$
appears in \eqref{eq:minor}. 

Our proof of Theorem \ref{thm:linear} then follows the argument of \S\ref{subsec:ind}, in the setup
(provided by the stratification theorem \cite[Thm.\,5.1]{BM17} as above) 
of the proof of \cite[Thm.\,1.1]{BM17} (the latter is a $\cC^\infty$ version of Theorem \ref{thm:linear});
for this reason, we only give a sketch below, and leave full details to the reader.

\begin{rmk}\label{rmk:infty}
For equations of the form \eqref{eq:varphi}, in general, the following conditions are equivalent:
\begin{enumerate}
\item[(1)] there is a uniform bound on the Chevalley functions $r(b,l)$, $b\in\varphi(M)$;
\item[(2)] there is a definable stratification of $\IR^n$ such that the diagram of initial exponents
of the module of formal relations $\cR(b)$ is constant on strata;
\item[(3)] the conclusion of Theorem \ref{thm:formal} in the $\cC^\infty$ case holds.
\end{enumerate}
This is proved in \cite{BM98} in the case that $A(x)$ and $\vp(x)$ are real-analytic, and
is extended to the case that these functions are $\cC^\infty$ and definable, in \cite{Bib}.

These equivalent conditions do not hold, however, in general, under the hypotheses of Theorem
\ref{thm:formal}, even in the special case of the composite function problem (the case
that $A(x) = I$), for a proper real-analytic mapping $\varphi$; there is a counterexample
due to Paw{\l}ucki \cite{Paw89}. The equivalent conditions above do hold in
the case that $A$ and $\varphi$ are semialgebraic, and in the case of the composite function
problem where $\vp$ is real-analytic and \emph{regular} in the sense of Gabrielov \cite{Gab73};
see \cite{BM87-2}. In the latter case, moreover, the Chevalley function has a uniform
linear bound \cite{ABM}.
\end{rmk}

\begin{lemma}\label{lem:unifchev}
Under the hypotheses of Theorem \ref{thm:linear}, the Chevalley function admits
a uniform linear upper bound.
\end{lemma}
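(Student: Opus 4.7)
The plan is to derive the uniform linear bound directly from the Chevalley estimate of Corollary \ref{cor:AR}(2), applied pointwise, combined with the diagram-constancy stratification theorem \cite[Thm.\,5.1]{BM17}.

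First, I would translate the Chevalley function into the language of Section \ref{sec:div}. Since $\vp = \id$ and $M = \IR^n$ in Theorem \ref{thm:linear}, for each $b \in \IR^n$ with shifted coordinate $y = x-b$, the Taylor series $\tilde A_b \in \IR\llb y\rrb^{p\times q}$ induces a module homomorphism $\tilde A_b: \IR\llb y\rrb^q \to \IR\llb y\rrb^p$, and one identifies
$$\cR(b) = \Ker \tilde A_b,\qquad \cR_r(b) = \tilde A_b^{-1}\!\left((y)^{r+1}\IR\llb y\rrb^p\right).$$
Let $\la(b) := \max_i |\al_i|$, where $(\al_i,j_i)$ are the vertices of the diagram $\cN(\Im \tilde A_b)\subset \IN^n\times\{1,\ldots,p\}$ of the column module of $\tilde A_b$. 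Applying Corollary \ref{cor:AR}(2) to $\tilde A_b$ with $\la = \la(b)$ and truncating by $\pi_{l-1}$ immediately yields
$$\pi_{l-1}\!\left(\cR_{l+\la(b)-1}(b)\right) = \pi_{l-1}(\cR(b)),$$
so that $r(b,l) \leq l + \la(b)$ for every $b$ and $l$.

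Second, I would invoke the stratification theorem \cite[Thm.\,5.1]{BM17} in the polynomially bounded definable setting (as flagged in the paragraph preceding the lemma): there is a finite definable stratification $\{\La_\tau\}$ of $\IR^n$ along which the diagram $\cN(\Im \tilde A_b)$ is constant, with value $\cN_\tau$. Consequently $\la(b) = \la_\tau$ is constant on each stratum, and since the stratification is finite we may set $\bar\la := \max_\tau \la_\tau < \infty$. Combining with the previous step gives the uniform linear bound
$$r(b,l) \leq l + \bar\la \qquad \text{for all } b \in \IR^n,\, l \in \IN,$$
which is the required conclusion with $\xi = 1$ and $\eta = \bar\la$.

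The only technical point that requires care is verifying that \cite[Thm.\,5.1]{BM17}—originally stated in a semialgebraic or analytic framework—carries over to a $\cC^\infty$ matrix $A$ definable in an arbitrary polynomially bounded $o$-minimal structure. This is the remark already made in the excerpt, and amounts to checking that the constructions underlying that theorem (Hironaka division with parametrized coefficients, together with generic constancy of the resulting diagram along definable families) only use operations available in any such structure that admits $\cC^\infty$ cell decomposition, via Miller's theorem and the quasianalyticity of the induced class of locally definable $\cC^\infty$ functions (see Remark \ref{rem:model}). This is the only step that is not essentially formal; once it is in hand, the Chevalley estimate reduces the lemma to a bound on finitely many combinatorial invariants of diagrams.
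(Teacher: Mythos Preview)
Your argument is correct and follows exactly the same route as the paper's own proof: identify $\cR_r(b)$ with the preimage $\tilde A_b^{-1}((y)^{r+1}\IR\llb y\rrb^p)$, apply the Chevalley estimate of Corollary~\ref{cor:AR}(2) to bound $r(b,l)$ by $l+\la(b)$, and then use the diagram-constancy stratification \cite[Thm.~5.1]{BM17} to bound $\la(b)$ uniformly. The paper's proof is simply the terse three-sentence version of what you have written out in detail.
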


\begin{proof}
For each $a\in\IR^n$, let $\cA(a,x) \subset \IR\llb x \rrb^p$ denote the submodule generated
by the formal expansions at $a$ of the columns of $A(x)$. According to \cite[Thm.\,5.1]{BM17},
there is a finite definable stratification of $\IR^n$ such that the diagram of initial exponents
$\cN(\cA(a,x))$ (as a function of $a$) is constant on strata. The assertion follows from Corollary \ref{cor:AR}.
\end{proof}

\begin{proof}[Sketch of the proof of Theorem \ref{thm:linear}]
We follow the argument of the proof of Proposition \ref{prop:ind}, except that we use
a finite definable stratification compatible with $B' \subset B$ (as in Proposition \ref{prop:ind})
and with the stratification by the diagrams of initial exponents provided by \cite[Thm.\,5.1]{BM17}. 

As in Proposition \ref{prop:ind}, we define a function $t=t(k)$ which has three main ingredients: we can
take $t\geq r + \s$, where $r\geq r(l)$, $l\geq k\rho$, and
\begin{enumerate}
\item[(1)] $r(l)$ is given by a uniform linear bound on the Chevalley function;
\item[(2)] $\rho$ is a bound on the exponents of regularity of the connected components of $B$
(as in the proof of Proposition \ref{prop:ind});
\item[(3)] $\s = \s(k)$ is a linear function of $k$ given by bounds on {\L}ojasiewicz exponents.
\end{enumerate}
Once we have these ingredients, we can argue in a way that is very similar to the proof
of Proposition \ref{prop:ind}.

Let us give a rough idea of (3). Let $\cA(a,x) \subset \IR\llb x\rrb^p$ denote the module
generated by the formal Taylor expansions at $a\in\IR^n$
of the columns of $A$, and let $\cR(a,x) \subset \IR\llb x\rrb^q$ denote the module of relations
among the formal expansions at $a$ of the columns of $A$. The stratification theorem
\cite[Thm.\,5.1]{BM17} provides families parametrized by the points $a$ of every stratum, of
generators of $\cA(a,x)$ and $\cR(a,x)$. These generators come from the formal division
algorithm (Theorem \ref{thm:formaldiv} and Corollary \ref{cor:diag}, where the coefficient
ring $A$ is a ring of $\cC^\infty$ definable functions):
the coefficients of the generators (as functions of $a$) are quotients of $\cC^\infty$ definable
functions whose denominators are products (of powers) of the initial coefficients of representatives of the
vertices of the diagrams, and the number of such factors in the denominator of a coefficient of a
monomial $x^\be$, $|\be|\leq k$, is at most $k$. So we get a linear function $\s(k) = \zeta k$,
where $\zeta$ is a bound on the {\L}ojasiewicz exponents of the initial coefficients, for all strata.
The reader can check the details by comparing with the proof of \cite[Thm.\,1.1]{BM17}.
\end{proof}


\end{document}